\newcommand{\vc}[1]{\ensuremath{\vcenter{\hbox{#1}}}}
\tikzset{unlabeled_vertex/.style={inner sep=1.7pt, outer sep=0pt, circle, fill}} 
\tikzset{labeled_vertex/.style={inner sep=2.2pt, outer sep=0pt, rectangle, fill=yellow, draw=black}} 
\tikzset{edge_color0/.style={color=black,line width=1.2pt,opacity=0.5}} 
\tikzset{edge_color1/.style={color=red,  line width=1.2pt,opacity=1}} 
\tikzset{edge_color2/.style={color=blue, line width=1.2pt,opacity=1}} 
\tikzset{edge_color3/.style={color=green,line width=1.2pt}} 
\tikzset{edge_color4/.style={color=red,  line width=1.2pt,dotted}} 
\tikzset{edge_color5/.style={color=blue, line width=1.2pt,dotted}} 
\tikzset{edge_color6/.style={color=green, line width=1.2pt,dotted}} 
\tikzset{edge_color7/.style={color=orange, line width=1.2pt}} 
\tikzset{edge_color8/.style={color=gray, line width=1.2pt}} 
\tikzset{edge_thin/.style={color=black}} 
\tikzset{edge_hidden/.style={color=black,dotted,opacity=0}} 
\tikzset{vertex_color1/.style={inner sep=1.7pt, outer sep=0pt, draw, circle, fill=red}} 
\tikzset{vertex_color2/.style={inner sep=1.7pt, outer sep=0pt, draw, circle, fill=blue}} 
\tikzset{vertex_color3/.style={inner sep=1.7pt, outer sep=0pt, draw, circle, fill=green}} 
\tikzset{labeled_vertex_color1/.style={inner sep=2.2pt, outer sep=0pt, draw, rectangle, fill=red}} 
\tikzset{labeled_vertex_color2/.style={inner sep=2.2pt, outer sep=0pt, draw, rectangle, fill=blue}} 
\tikzset{labeled_vertex_color3/.style={inner sep=2.2pt, outer sep=0pt, draw, rectangle, fill=green}} 
\def\outercycle#1#2{ \draw \foreach \x in {0,1,...,#2}{(0.5*\x,0) coordinate(x\x)}; 
\path (0,0.3) -- (1,0.3); 
} 
\def\drawhypervertex#1#2{ \draw[edge_color2] (x#1)++(0,-0.2-0.2*#2)+(-0.2,0) -- +(0.2,0);} 
\def\drawhyperedge#1#2{ \draw[dotted] (x0)++(0,-0.2-0.2*#1)--++(0.5*#2-0.5,0);
\path (0,-0.4-0.2*#1) -- (0,0); 
} 
\tikzset{
vtx/.style={inner sep=1.1pt, outer sep=0pt, circle, fill,draw}, 
vtxl/.style={inner sep=1.1pt, outer sep=0pt, rectangle, fill=yellow,draw=black}, 
hyperedge/.style={fill=pink,opacity=0.5,draw=black}, 
}
\newcommand{\Kfourthree}{\vc{
\begin{tikzpicture}
\draw
(0,0) coordinate(1) node[vtx](a){}
(1,0) coordinate(2) node[vtx](b){}
(1,1) coordinate(3) node[vtx](c){}
(0,1) coordinate(4) node[vtx](d){}
;
\draw[hyperedge] (1) to[out=30,in=90] (2) to[out=100,in=260] (3) to[out=260,in=30] (1);
\draw[hyperedge] (1) to[out=90,in=150] (2) to[out=150,in=280] (4) to[out=280,in=90] (1);
\draw[hyperedge] (1) to[out=60,in=200] (3) to[out=200,in=330] (4) to[out=330,in=60,looseness=1.5] (1);
\draw[hyperedge] (2) to[out=120,in=230,,looseness=1.8] (3) to[out=230,in=300,looseness=1.8] (4) to[out=300,in=120,looseness=1.8] (2);
\end{tikzpicture}
}}
\newcommand{\Kfourthreeminus}{\vc{
\begin{tikzpicture}
\draw
(0,0) coordinate(1) node[vtx](a){}
(1,0) coordinate(2) node[vtx](b){}
(1,1) coordinate(3) node[vtx](c){}
(0,1) coordinate(4) node[vtx](d){}
;
\draw[hyperedge] (1) to[out=30,in=90] (2) to[out=100,in=260] (3) to[out=260,in=30] (1);
\draw[hyperedge] (1) to[out=90,in=150] (2) to[out=150,in=280] (4) to[out=280,in=90] (1);
\draw[hyperedge] (1) to[out=60,in=200] (3) to[out=200,in=330] (4) to[out=330,in=60,looseness=1.5] (1);
\end{tikzpicture}
}}
\newtheorem{thm}{Theorem} 
\newtheorem{lemma}[thm]{Lemma}
\newtheorem{cor}[thm]{Corollary}
\theoremstyle{definition}
\newtheorem{defn}[thm]{Definition}
\theoremstyle{definition}
\newtheorem{const}{Construction}
\newcommand{\F}{\mathcal{F}}
\newcommand{\floor}[1]{\left\lfloor{#1}\right\rfloor}
\DeclareMathOperator{\ex}{ex}
\DeclareMathOperator{\coex}{coex}
\title{Positive co-degree density of hypergraphs}
\author{Anastasia Halfpap\thanks{Department of Mathematical Sciences, University of Montana. Email: \texttt{anastasia.halfpap@umontana.edu}.}
\qquad
Nathan Lemons\thanks{Theoretical Division, Los Alamos National Laboratory, Email: \texttt{nlemons@lanl.gov}.}
\qquad
Cory Palmer\thanks{Department of Mathematical Sciences, University of Montana. Email: \texttt{cory.palmer@umontana.edu}.
Research supported by a grant from the Simons Foundation \#712036.}}
\begin{document}

\maketitle

\begin{abstract}
The \emph{minimum positive co-degree} of a non-empty $r$-graph ${H}$, denoted $\delta_{r-1}^+( {H})$, is the maximum $k$ such that if $S$ is an $(r-1)$-set contained in a hyperedge of $ {H}$, then $S$ is contained in at least $k$ distinct hyperedges of $ {H}$. Given an $r$-graph ${F}$, we introduce the {\it positive co-degree Tur\'an number} $\mathrm{co^+ex}(n, {F})$ as the maximum positive co-degree $\delta_{r-1}^+(H)$ over all $n$-vertex $r$-graphs $H$ that do not contain $F$ as a subhypergraph.

In this paper we concentrate on the behavior of $\mathrm{co^+ex}(n, {F})$ for $3$-graphs $F$. In particular, we determine asymptotics and bounds for several well-known concrete $3$-graphs $F$ (e.g.\ $K_4^-$ and the Fano plane). 
We also show that, for $r$-graphs, the limit
\[
\gamma^+(F) := \lim_{n \rightarrow \infty} \frac{\mathrm{co^+ex}(n, {F})}{n}
\]
exists, and ``jumps'' from $0$ to $1/r$, i.e., it never takes on values in the interval $(0,1/r)$. Moreover, we characterize which $r$-graphs $F$ have $\gamma^+(F)=0$.
Our motivation comes primarily from the study of (ordinary) co-degree Tur\'an numbers where a number of results have been proved that inspire our results.
\end{abstract}

\section{Introduction}

An {\it $r$-graph} is a hypergraph whose edges (called {\it$r$-edges}) all have size $r$, i.e., an $r$-uniform hypergraph. Given a hypergraph $H$, we use $V(H)$, or simply $V$, to denote the vertex set of $H$ and $E(H)$ or $E$ for the edge set. We often denote a hyperedge by the concatenation of its vertices. For a family of hypergraphs $\F$, a hypergraph is {\it $\F$-free} if it does not contain a member of $\F$ as a subhypergraph\footnote{ 
 When the forbidden family $\F$ consists of a single hypergraph $F$ we will write $F$ in place of $\F=\{F\}$.}.
The {\it Tur\'an number} (or {\it extremal number}) $\ex_r(n,\F)$ is the maximum number of edges in an $n$-vertex $\F$-free $r$-graph. The {\it Tur\'an density} of $\F$ is the limit
\[
\pi(\F) := \lim_{n \rightarrow \infty} \frac{\ex_r(n,\F)}{\binom{n}{r}}
\]
which exists by an argument due to Katona, Nemetz and Simonovits\cite{KNS}. For graphs (i.e., when $r=2$), much is known about the Tur\'an density. For example, the Erd\H os-Stone theorem~\cite{ErdosSimonovits,ErdosStone} determines $\pi(F)$. On the other hand, when $r\geq 3$, very few exact results are known. Notoriously, the Tur\'an density of the complete $4$-vertex $3$-graph is unknown.

Instead of maximizing the number of edges, we may examine degree versions of the Tur\'an number. For $r=2$, this is equivalent to determining the function $\ex(n,F)$, but when $r \geq 3$ there are multiple interpretations of ``degree.'' In an $r$-graph, the {\it co-degree} of an $(r-1)$-set $S$ is the number of edges containing $S$. The minimum co-degree over all $(r-1)$-sets in $H$ is denoted $\delta_{r-1}(H)$.

Let ${\F}$ be a family of $r$-graphs.
 The {\it co-degree Tur\'an number} $\coex(n,\F)$ is the maximum value of $\delta_{r-1}(H)$ over all $n$-vertex $r$-graphs $H$ that do not contain a member of $\F$ as a subhypergraph. 
Mubayi and Zhao~\cite{Mubayicode} established the existence of the limit (called the {\it co-degree density of $\F$})
\[
\gamma(\F) : = \lim_{n \rightarrow \infty} \frac{\coex(n,\F)}{n}
\]
and proved several results concerning $\gamma(\F)$.

For a summary of these measures of extremality (and others) see the excellent survey of Balogh, Clemen, and Lidick\'y~\cite{l2norm}.

Motivated by the degree versions of the Erd\H os-Ko-Rado theorem and co-degree Tur\'an numbers, Balogh, Lemons and Palmer \cite{BLP} examined the notion of positive co-degree and they determined the maximum size of an intersecting $r$-graph with minimum positive co-degree at least $k$ (see also extensions by Spiro~\cite{Spiro}).

\begin{defn}
    The \emph{minimum positive co-degree} of a non-empty $r$-graph ${H}$, denoted $\delta_{r-1}^+( {H})$, is the maximum $k$ such that if $S$ is an $(r-1)$-set contained in a hyperedge of $ {H}$, then $S$ is contained in at least $k$ distinct hyperedges of $ {H}$. 
\end{defn}

In light of these results, we propose the problem to maximize the minimum positive co-degree of an $n$-vertex $r$-graph subject to avoiding some forbidden subhypergraph $F$. 

\begin{defn}
    Let ${\F}$ be a family of $r$-graphs. We define the {\it positive co-degree Tur\'an number} $\mathrm{co^+ex}(n, {\F})$ as the maximum positive co-degree $\delta_{r-1}^+(H)$ over all $n$-vertex $r$-graphs $H$ that do not contain a member of $\F$ as a subhypergraph.
\end{defn}
 
We will say that an $n$-vertex, ${F}$-free $r$-graph $H$ is an \textit{extremal $r$-graph for} $ {F}$ if $\delta_{r-1}^+(H) = \mathrm{co^+ex}(n, {F})$.

Define the {\it positive co-degree density} of a forbidden family $\F$ as
\[
\gamma^+(\F) := \lim_{n \rightarrow \infty} \frac{\mathrm{co^+ex}(n,\F)}{n}.
\]

It is not immediately clear that $\underset{n \rightarrow \infty}{\lim} {\mathrm{co^+ex}(n,\F)}/{n}$ should exist in general; however, the argument demonstrating the existence of $\gamma(F)$ can be adapted to demonstrate the existence of $\gamma^+(F)$. Indeed, this was done by Pikhurko \cite{P}. We give an alternate proof in Section~\ref{sec-secondary} using the hypergraph removal lemma and a supersaturation result. 

In Section~\ref{sec-secondary}, we shall also examine the possible values of $\gamma^+(F)$. In particular, we show that there is no family $\mathcal{F}$ of forbidden $3$-graphs such that $\gamma^+(\mathcal{F}) \in (0,1/3)$. This situation is in contrast to the behavior of $\gamma(\mathcal{F})$, which is shown not to jump
by Mubayi and Zhao in \cite{Mubayicode}. On the other hand, for $3$-graphs, there are no Tur\'an densities in the range $(0,2/9)$ (see~\cite{Er1, Keevashsurvey}).

Before deriving these general results, in Section~\ref{sec-primary} we determine (or bound) the positive co-degree densities of a number of concrete $3$-graphs. These results are summarized in the last two columns of the table below with comparisons to the corresponding bounds for Tur\'an density $\pi$ and co-degree density $\gamma$.
The individual $3$-graphs in the table are defined in the respective subsections of Section~\ref{sec-primary}.

Table entries for $\gamma^+$ without citations represent original results which are best-known; note that (following initial circulation of these results) several improvements have been reported by other authors for bounds on $\gamma^+$. We have also included these bounds in the table below, with citations. For the sake of completeness, even in cases where new bounds on $\gamma^+$ have been reported, we will include sections with our originally circulated (sometimes elementary) bounds.

\begin{center}
\begin{tabular}{|l||c|c|c|c|c|c|}
\hline
 $F$ & $\leq \pi(F) $  & $\pi(F) \leq $ & $\leq \gamma(F) $ & $\gamma(F) \leq   $ & $\leq \gamma^+(F)  $ & $\gamma^+(F) \leq $ \\
 \hline
 \hline
 \hyperref[k4-]{$K_4^-$} & 2/7 \cite{K43-extremalfrankl}  & 0.28689 \cite{flagmatic} & $1/4$ \cite{codegreeconj} & $1/4$  \cite{FalgasK4-}   & $ 1/3 $ & $  1/3 $\\
\hyperref[f5]{$F_5$} & 2/9  \cite{Bollobascancellative} & 2/9 \cite{F5Frankl} & 0 \cite{l2norm} & 0 \cite{l2norm} & $1/3$ & $1/3$\\
\hyperref[f32]{$F_{3,2}$} & 4/9  \cite{F32Mubayi} & 4/9 \cite{F32furedi} & 1/3 \cite{codF32falgas} & 1/3 \cite{codF32falgas} & $1/2$ & $1/2$\\
\hyperref[fano]{$\mathbb{F}$} & 3/4 \cite{VeraSos} & 3/4 \cite{FanoFuredi} & 1/2 \cite{MubayiFano} & 1/2 \cite{MubayiFano} & 2/3 & 2/3\\
 \hyperref[k4]{$K_4$} &5/9 \cite{MR177847} & 0.5615 \cite{BaberTuran} & 1/2 \cite{MR1829685} & 0.529 \cite{l2norm} & $1/2$ & 0.54296  \cite{volec} \\

\hyperref[f33]{$F_{3,3}$} & 3/4 \cite{F32Mubayi} & 3/4 \cite{F32Mubayi} & 1/2 \cite{l2norm} & 0.604 \cite{l2norm} & 3/5 & 0.616 \cite{BL} \\
\hyperref[c5c5-]{$C_{5}$} & $2\sqrt{3} - 3$ \cite{F32Mubayi} & 0.46829 \cite{flagmatic} & 1/3 \cite{l2norm} & 0.3993 \cite{l2norm} & $1/2$ & 1/2 \cite{Wu} \\
\hyperref[c5c5-]{$C_{5}^-$} &1/4 \cite{F32Mubayi} & 0.25074\cite{flagmatic} & 0 \cite{l2norm} & 0.136 \cite{l2norm} & 1/3 & 1/3 \cite{Wu} \\

 \hyperref[jk]{$J_4$} & $1/2$ \cite{DaisyBollobas} & $0.50409$ \cite{flagmatic} & $1/4$ \cite{l2norm} & $0.473$ \cite{l2norm} & $4/7$ & 0.58 \cite{BL} \\
\hline
\end{tabular}
\end{center}

Note that for all $3$-graphs $F$ which we consider, except for the $K_4$, we are able to show that $\gamma(F) \neq \gamma^+(F)$. The key difference is that in the positive co-degree setting, we are allowed co-degree zero pairs, and indeed our constructions feature large sets of vertices whose co-degrees are pairwise zero. We can often consider such constructions as analogous to blow-ups, which are not possible in the ordinary co-degree setting.

\section{Bounds for small 3-graphs}\label{sec-primary}

In this section, we will consider forbidden $3$-graphs which are not $3$-partite, i.e., do not appear in a $K_{n/3, n/3, n/3}$ for any $n$. We begin with some definitions which will be useful throughout the section. 

For $k \geq r$,  an $r$-graph $H$ is \textit{k-partite} if there exists a partition $V_1, V_2, \dots, V_k$ of its vertex set such that each $r$-edge intersects each partition class in at most one vertex. 
A $k$-partite $r$-graph $H$ is \textit{complete} if 
every possible edge is present and is {\it balanced} if the class sizes differ by at most $1$ (i.e.\ are as close in size as possible).

A $3$-graph $H$ is \textit{bipartite} if there is a partition $X,Y$ of its vertex set such that any $3$-edge of $H$ intersects both $X$ and $Y$. We say that $H$ is \textit{one-way bipartite} if there exists a partition $X,Y$ such that every $3$-edge is of the form $x_1x_2y$, where $x_1,x_2 \in X$ and $y \in Y$, and is \textit{complete one-way bipartite} if every $3$-edge of the form $x_1x_2y$ is present.

Let $H$ be an $r$-graph with vertex set $V$. We say that a subset $S$ of $V$ is \textit{independent} if every $r$-edge of $H$ includes a vertex in $V\setminus S$. We say that $S$ is \textit{strongly independent} if every $(r-1)$-subset of $S$ has co-degree zero, i.e., no $r$-edge intersects $S$ in at least $r-1$ vertices. Note that in a one-way bipartite graph one partition class is independent and the other is strongly independent.

Given an $3$-graph $H$ and vertices $u,v$ of $H$, the \textit{common neighborhood} of $u,v$, denoted by $N(u,v)$, is the set of vertices $w$ such that $uvw$ form a $3$-edge of $H$.

\subsection{\texorpdfstring{$K_4^-$}{TEXT}}\label{k4-}

\begin{center}
  \begin{longtable}{ | l|| c |  c | c  |   }
    \hline
    $H$ &   Edges   &  Figure 1 &  Figure 2   \\
    \hline\hline
    \hline
    $K_4^{-}$ 
    &123, 124, 134
    &
\vc{\begin{tikzpicture}\outercycle{5}{4}
\draw (x0) node[unlabeled_vertex]{};\draw (x1) node[unlabeled_vertex]{};\draw (x2) node[unlabeled_vertex]{};\draw (x3) node[unlabeled_vertex]{};
\drawhyperedge{0}{4}
\drawhypervertex{0}{0}
\drawhypervertex{1}{0}
\drawhypervertex{2}{0}
\drawhyperedge{1}{4}
\drawhypervertex{0}{1}
\drawhypervertex{1}{1}
\drawhypervertex{3}{1}
\drawhyperedge{2}{4}
\drawhypervertex{0}{2}
\drawhypervertex{2}{2}
\drawhypervertex{3}{2}

\path (0,0.4) -- (0,-1.1); 
\end{tikzpicture} 
}    
    &
   \Kfourthreeminus
   \\
\hline
  \end{longtable}
\end{center}

We will begin by determining $\mathrm{co^+ex}(n,K_4^-)$. The first theorem which we state, due to Frankl and F\"uredi \cite{K43-extremalfrankl}, characterizes a special family of $3$-graphs which will provide constructions achieving $\mathrm{co^+ex}(n,K_4^-)$. Before stating the theorem, we will need to introduce a pair of families of constructions from \cite{K43-extremalfrankl}. 

\begin{const} Let $H$ consist of $n$ vertices placed on the unit circle, with $3$-edges those triples $xyz$ such that the triangle with vertices $x,y,z$ contains the origin. (We may assume that no pair of vertices are placed so that the line connecting them passes through the origin.)
\end{const}

\begin{const} Let $ {H}_6$ be the unique $(6,3,2)$-design, i.e., the $3$-graph on vertex set $\{1,2,3,4,5,6\}$ with $3$-edge set
$$ E_6 = \{ 123, 124, 345, 346, 561, 562, 135, 146, 236, 245 \}.$$
We take an $n$-vertex blow-up of $ {H}_6$, that is, a $3$-graph $ {H}$ with vertex set $V$ of size $n$ partitioned as $V = V_1 \cup \dots \cup V_6$, and $3$-edge set 
$$ {E} = \{ v_{i_1}v_{i_2}v_{i_3}: i_1 < i_2 < i_3; \text{ for all } i_j, v_{i_j} \in V_{i_j} ; i_1 i_2 i_3 \in  E_6\}.$$
\end{const}

\begin{const} Let $ {H}_6$ be the unique $(6,3,2)$-design, i.e., the $3$-graph on vertex set $\{1,2,3,4,5,6\}$ with $3$-edge set
$$ E_6 = \{ 123, 124, 345, 346, 561, 562, 135, 146, 236, 245 \}.$$
We take an $n$-vertex blow-up of $ {H}_6$, that is, a $3$-graph $ {H}$ with vertex set $V$ of size $n$ partitioned as $V = V_1 \cup \dots \cup V_6$, and $3$-edge set 
$$ {E} = \{ v_{i}v_{j}v_{k}: \text{ for all } v_i \in V_i, v_{j} \in V_{j}, v_k \in V_k \text{ such that } ijk \in  E_6\}.$$
\end{const}

We depict $ H_6$ in Figure~\ref{fig1}.

\begin{figure}[ht]
\begin{center}

\resizebox{0.3\textwidth}{!}{
\vc{\begin{tikzpicture}\outercycle{6}{5} \outercycle{5}{4}
\filldraw (0,0) circle(0.05 cm);
\filldraw (0.5,0) circle(0.05 cm);
\filldraw (1,0) circle(0.05 cm);
\filldraw (1.5,0) circle(0.05 cm);
\filldraw (2,0) circle(0.05 cm);
\filldraw (2.5,0) circle(0.05 cm);
\drawhyperedge{0}{6}
\drawhypervertex{0}{0}
\drawhypervertex{1}{0}
\drawhypervertex{2}{0}

\drawhyperedge{1}{6}
\drawhypervertex{0}{1}
\drawhypervertex{1}{1}
\drawhypervertex{3}{1}

\drawhyperedge{2}{6}
\drawhypervertex{2}{2}
\drawhypervertex{3}{2}
\drawhypervertex{4}{2}

\drawhyperedge{3}{6}
\drawhypervertex{2}{3}
\drawhypervertex{3}{3}
\drawhypervertex{5}{3}

\drawhyperedge{4}{6}
\drawhypervertex{0}{4}
\drawhypervertex{4}{4}
\drawhypervertex{5}{4}

\drawhyperedge{5}{6}
\drawhypervertex{1}{5}
\drawhypervertex{4}{5}
\drawhypervertex{5}{5}

\drawhyperedge{6}{6}
\drawhypervertex{0}{6}
\drawhypervertex{2}{6}
\drawhypervertex{4}{6}

\drawhyperedge{7}{6}
\drawhypervertex{0}{7}
\drawhypervertex{3}{7}
\drawhypervertex{5}{7}

\drawhyperedge{8}{6}
\drawhypervertex{1}{8}
\drawhypervertex{2}{8}
\drawhypervertex{5}{8}

\drawhyperedge{9}{6}
\drawhypervertex{1}{9}
\drawhypervertex{3}{9}
\drawhypervertex{4}{9}

\path (0,0.4) -- (0,-1.1); 
\end{tikzpicture} 
}   
}

\caption{The $3$-edges of $H_6$.}\label{fig1}
\end{center}
\end{figure}
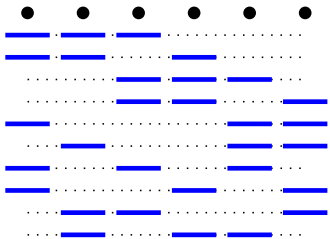

It is easy to see that the $3$-graphs in Constructions 1 and 2 have the property that any four vertices span $0$ or $2$ $3$-edges. In fact, the following theorem says that these are the only $3$-graphs with this property.

\begin{thm}[Frankl-F\"uredi, \cite{K43-extremalfrankl}] \label{FF}
    Suppose $ {H}$ is a $3$-graph in which any $4$ vertices span $0$ or $2$ $3$-edges. Then $ {H}$ is isomorphic to one of the $3$-graphs in Construction 1 or 2.
\end{thm}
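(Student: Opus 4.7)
The plan is to study $H$ through its link graphs. For each vertex $v$, let $L_v$ be the graph on $V(H) \setminus \{v\}$ with edge set $\{\{a,b\} : vab \in E(H)\}$. Counting the edges spanned by the 4-set $\{v,a,b,c\}$ gives
\[
|\{ab,ac,bc\} \cap E(L_v)| + \mathbf{1}_{abc \in E(H)} \in \{0, 2\}.
\]
From this one reads off two consequences. First, $L_v$ is triangle-free: a triangle $\{a,b,c\}$ in $L_v$ would force the sum above to be $3$ or $4$ (depending on whether $abc \in E(H)$), violating the hypothesis. Second, $L_v$ determines $H$ completely via the rule that a triple $\{a,b,c\} \subseteq V(H) \setminus \{v\}$ lies in $E(H)$ if and only if exactly one of $ab, ac, bc$ is an edge of $L_v$.

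Next I would show that $L_v$ contains no induced $2K_2$. Suppose for contradiction that $L_v$ induces two disjoint edges $ab$ and $cd$ on a 4-set $\{a,b,c,d\} \subseteq V(H) \setminus \{v\}$ (so none of $ac, ad, bc, bd$ lies in $L_v$). Applying the edge-determination rule above to each of the triples $abc, abd, acd, bcd$, each has exactly one $L_v$-edge, so all four triples lie in $E(H)$. But then $\{a,b,c,d\}$ spans four edges of $H$, contradicting the hypothesis.

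It remains to classify triangle-free, induced-$2K_2$-free graphs and to lift each case to a global description of $H$. I would split on whether $L_v$ is bipartite. If bipartite, the $2K_2$-freeness makes $L_v$ a chain graph (one side has nested neighborhoods), and I would use this structure to reconstruct a cyclic order on $V(H)$ realizing $H$ as in Construction~1. If $L_v$ is non-bipartite, triangle-freeness gives an induced odd cycle of length at least $5$; any induced odd cycle of length $\geq 7$ immediately contains an induced $2K_2$, so $L_v$ must contain an induced $C_5$. A short argument then shows that every vertex outside this $C_5$ has either $0$ or exactly two (necessarily non-adjacent) neighbors on it, making $L_v$ a blow-up of $C_5$. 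The five parts of this blow-up together with $\{v\}$ form a 6-partition of $V(H)$, and the $L_v$-rule matches the edge set of a blow-up of $H_6$, yielding Construction~2.

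The main obstacle will be the final step: translating the local classification of $L_v$ into a global isomorphism with Construction~1 or~2. One must propagate the 6-partition (respectively, the chain-graph order) obtained from a single link to all of $V(H)$ and verify that every other link $L_u$ agrees with the corresponding blow-up or chain pattern. The 4-set hypothesis and the rigidity of the link graphs established above provide the needed constraints, but stitching the local data from different vertices into a single globally consistent arrangement requires careful bookkeeping.
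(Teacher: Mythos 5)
The paper gives no proof of Theorem~\ref{FF}: it is cited from Frankl and F\"uredi~\cite{K43-extremalfrankl}, so there is no in-paper argument to compare against. That said, the link-graph approach you describe is correct and, as far as I can tell, is essentially the route of the original proof.

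Your key lemma --- that each link $L_v$ is triangle-free with no induced $2K_2$ --- is correctly argued, and it is worth noting that these are in fact the \emph{complete} constraints that the $4$-set hypothesis places on $L_v$: running your edge-determination rule ($abc \in E(H)$ iff exactly one of $ab,bc,ca$ lies in $L_v$) through every triangle-free $4$-vertex graph (empty, one edge, $P_3+K_1$, $K_{1,3}$, $P_4$, $C_4$, $2K_2$) shows the induced $4$-set always spans $0$ or $2$ edges of $H$ except when the link restricts to $2K_2$. So nothing else needs to be extracted from the hypothesis before classifying $L_v$.

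You underestimate yourself at the end, though. The ``stitching'' worry is spurious: having observed that $L_v$ \emph{determines} $H$ via the edge rule, you never need to reconcile different links or ``propagate a $6$-partition.'' Fix a single $v$, classify $L_v$, apply the rule, and verify directly that the resulting $3$-graph is Construction~1 or~2 --- a finite computation, not a bookkeeping problem. The one genuine gap is your treatment of isolated vertices of $L_v$ in the non-bipartite case. A vertex with $0$ neighbors on the induced $C_5$ is forced (by $2K_2$-freeness against each of the five $C_5$-edges) to be isolated in all of $L_v$, so $L_v$ is a blow-up of $C_5$ \emph{plus} isolated vertices; those isolated vertices do not form a new part, they join $v$ to make the sixth class of the $H_6$ blow-up. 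With that correction, applying the edge rule reproduces exactly the edges of a blow-up of $H_6$, and the argument closes.
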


We are now ready to state our theorem for $K_4^-$.

\begin{thm}
\[
\mathrm{co^+ex}(n,K_4^-) = \Big\lfloor \frac{n}{3} \Big\rfloor.
\]
Moreover, when $n \equiv 0 \pmod 6$, then there are exactly two extremal constructions, namely the complete balanced $3$-partite $3$-graph and the blow-up of $ {H}_6$ whose class sizes are balanced; when $n \equiv 3 \pmod 6$, the unique extremal construction is the complete balanced $3$-partite $3$-graph. 

\end{thm}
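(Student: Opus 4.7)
The plan is to prove the upper bound $\mathrm{co^+ex}(n,K_4^-) \le \lfloor n/3 \rfloor$ by a local argument at each edge. For any edge $uvw$ of a $K_4^-$-free $3$-graph $H$, I claim that $N(u,v), N(u,w), N(v,w)$ are pairwise disjoint: a common vertex $x \in N(u,v) \cap N(u,w)$ would yield edges $uvx$ and $uwx$ which, together with $uvw$, form a copy of $K_4^-$ on $\{u,v,w,x\}$. Since $w \in N(u,v)$, $v \in N(u,w)$, and $u \in N(v,w)$, these sets all lie in $V$, so $3\delta_2^+(H) \le |N(u,v)| + |N(u,w)| + |N(v,w)| \le n$, giving $\delta_2^+(H) \le \lfloor n/3 \rfloor$.

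Matching the upper bound with the complete balanced $3$-partite $3$-graph is immediate: any four vertices meet at most three parts and so span $0$ or $2$ edges (hence no $K_4^-$), and any cross-class pair has as its common neighborhood the third class. Thus $\delta_2^+ = \lfloor n/3 \rfloor$, confirming the value of $\mathrm{co^+ex}(n,K_4^-)$ for every $n$.

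For the structural statement with $n = 3m$, assume $\delta_2^+(H) = m$ and fix an edge $uvw$. By the disjointness argument each of $N(u,v), N(u,w), N(v,w)$ has size at least $m$, they are pairwise disjoint in $V$, and their sizes sum to at most $n = 3m$; hence each has size exactly $m$ and together they partition $V$. Consequently, for every $x \notin \{u,v,w\}$ exactly one of $uvx, uwx, vwx$ is an edge of $H$, so every $4$-set that contains at least one edge of $H$ spans exactly $2$ edges. The hypothesis of Theorem~\ref{FF} is met, so $H$ is isomorphic to a $3$-graph in Construction~1 or Construction~2.

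To finish, I would determine which instances in these two families achieve $\delta_2^+ = m$. For a blow-up of $H_6$ with part sizes $m_1,\dots,m_6$, the co-degree of a cross-pair in $V_i \times V_j$ equals $m_k + m_l$, where $\{k,l\}$ are the third vertices of the two edges of $H_6$ through $\{i,j\}$. A direct check shows that these ``partner pairs'' form a $1$-factorization of $K_6$ into five perfect matchings on $\{1,\dots,6\}$; summing the inequality $m_k + m_l \ge m$ along any one matching and comparing with $\sum_i m_i = n = 3m$ forces every partner-pair sum to equal $m$, and the resulting linear system has only the balanced solution $m_1 = \cdots = m_6 = n/6$, which is integral iff $6 \mid n$. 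For Construction~1, a brief analysis on the circle shows that the co-degree of a pair whose chord subtends a shorter arc of measure $\theta$ equals the number of points in the antipodal arc of measure $\theta$; demanding that every positive co-degree equal $m$ then forces the points to lie in three equal clusters at mutual angular distance $2\pi/3$, recovering exactly the complete balanced $3$-partite $3$-graph. Combining these yields the claimed dichotomy: when $n \equiv 0 \pmod 6$ both extremal graphs exist, and when $n \equiv 3 \pmod 6$ only the complete balanced $3$-partite $3$-graph does. The main obstacle I anticipate is the Construction~1 analysis, where one must carefully exclude cyclic placements other than the $3$-cluster arrangement, since intermediate placements can produce pairs with small positive co-degrees whose value must be shown to fall strictly below $m$.
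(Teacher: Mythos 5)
Your upper bound, the complete-balanced-$3$-partite lower bound, and the reduction via the Frankl--F\"uredi theorem all match the paper's approach. One small point of rigor: your claim that ``every $4$-set that contains at least one edge of $H$ spans exactly $2$ edges'' follows from the neighborhood-partition argument run at a \emph{fixed} edge $uvw$; to cover all $4$-sets you must observe that the same argument applies to whichever edge $abc$ the $4$-set contains, which is what the paper does. Your analysis of Construction~2 is a genuinely pleasant variant: the partner-pair map does indeed cycle the three pairs of each matching of the $1$-factorization of $K_6$, so summing $m_k+m_l\ge m$ over one matching gives $\sum m_i = n = 3m$, forcing equality, and running this across three matchings pins down $m_1=\cdots=m_6=n/6$. (Both you and the paper implicitly assume all six blow-up classes are nonempty; if some class is empty the graph degenerates to a blow-up of a smaller link of $H_6$ and an easy check shows the positive co-degree drops below $n/3$, but this should be stated.)

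The genuine gap is exactly where you flag it: Construction~1. You describe the needed step as ``a brief analysis on the circle,'' but in the paper this is the longest and most delicate part of the uniqueness argument, and simply demanding ``every positive co-degree equals $m$'' does not by itself rule out non-clustered placements --- you must engage with the interaction between which pairs get positive co-degree and where their antipodal arcs lie. The paper's key device is to choose a positive co-degree pair $X,Y$ \emph{minimizing} the angular distance $d(X,Y)$. Then any two vertices inside the antipodal arc $X'Y'$ are closer than $d(X,Y)$ and hence have co-degree zero, which forces $N(X,Z)\subseteq X'Y$ for each $Z\in X'Y'$ (any $W\in N(X,Z)$ outside $X'Y$ would again have $d(X,W)<d(X,Y)$, contradicting minimality). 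This cascades to show that the three arcs $X'Y'$, $X'Y$, $XY'$ each contain exactly $n/3$ vertices and that the cross-arc co-degree condition forces completeness, recovering $K_{n/3,n/3,n/3}$. Without this extremal-pair argument you cannot exclude the ``intermediate placements'' you worry about, so as written the proposal does not actually establish the uniqueness claim for $n\equiv 3\pmod 6$.
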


\begin{proof}

We first show that $\mathrm{co^+ex}(n,K_4^-) \leq \lfloor n/3 \rfloor$. Let $H$ be a $K_4^-$-free 3-graph on $n$ vertices. Fix a $3$-edge $xyz$ of $H$. It is easy to see that $N(x,y), N(y,z),$ and $N(x,z)$ must be pairwise disjoint; indeed, suppose there is a vertex $w \in N(x,y) \cap N(y,z)$. Then $xyz, xyw, yzw$ are three hyperedges spanning four vertices, i.e., a copy of $K_4^-$, a contradiction. Thus, we must have 
$$|N(x,y)| + |N(y,z)| + |N(x,z)| \leq n.$$
This immediately implies that one of $N(x,y), N(y,z), N(x,z)$ has size at most $\lfloor n/3 \rfloor$.  

Note that it is possible to achieve $\delta_2^+(H) = \lfloor n/3 \rfloor$ for any $n$, since the complete balanced $3$-partite $3$-graph on $n$ vertices has minimum positive co-degree $\lfloor n/3 \rfloor$ and contains no copy of $K_4^-$. Thus, all that is left is to determine whether this extremal construction is unique. We do this when $n$ is divisible by 3. 

Suppose $n$ is divisible by $3$ and $H$ a is $K_4^-$-free $3$-graph on $n$ vertices with $\delta_2^+(H) = n/3$. We claim that any $4$ vertices of $H$ span either 0 or 2 $3$-edges. Indeed, take $4$ vertices $x,y,z,w$ of $H$. If these span 0 $3$-edges, we are done, so suppose not. Thus, $x,y,z,w$ span at least one $3$-edge; without loss of generality, $xyz$ is a $3$-edge. We have seen that $N(x,y),N(y,z),$ and $N(x,z)$ are pairwise disjoint, and all must have size at least $n/3$ to satisfy the minimum positive co-degree condition on $H$. This implies that all have size exactly $n/3$, and the vertex set is partitioned by $N(x,y),N(y,z),N(x,z)$. In particular, $w$ is in (exactly) one of $N(x,y),N(y,z), N(x,z)$, showing that $x,y,z,w$ span exactly 2 $3$-edges.

We can now apply Theorem \ref{FF} to conclude that $H$ is isomorphic to one of the $3$-graphs described in Constructions $1$ and $2$. Among these $3$-graphs, we claim that at most two have minimum positive co-degree $n/3$. 

We first consider $3$-graphs of the type described in Construction $1$. Let $H$ be a $3$-graph in the family described by Construction $1$, and suppose that $\delta_2^+(H) = n/3$. We claim that $H$ is the complete balanced $3$-partite $3$-graph.

To prove this claim, let $d(X,Y)$ denote the distance on the circle between vertices $X,Y$. Choose vertices $X,Y$ with positive co-degree such that
$$d(X,Y) = \min \{d(A,B): A,B \text{ have positive co-degree}\}.$$
Denote by $X', Y'$ the antipodes of $X,Y$. Recall that we assume that no two vertices lie on a line through the origin, so the points $X'$ and $ Y'$ are not vertices of $H$. 

We will write $UW$ to denote the (minor) arc between two points $U,W$ on the circle. For a vertex $V$ of $H$, we write $V \in UW$ if $V$ lies in the arc $UW$.

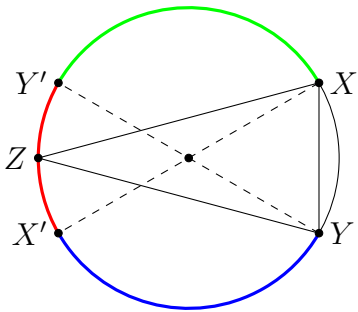
\begin{figure}[ht]
\begin{center}

\begin{tikzpicture}

\draw (0,0) circle (2 cm);

\draw[green, very thick] ({2*cos(30)}, {2*sin(30)}) arc(30:150:2);

\draw[red, very thick] ({2*cos(150)}, {2*sin(150)}) arc(150:210:2);

\draw[blue, very thick] ({2*cos(210)}, {2*sin(210)}) arc(210:330:2);

\filldraw (0,0) circle(0.05 cm);

\filldraw ({2*cos(30)}, {2*sin(30)}) circle(0.05 cm);
\filldraw ({2*cos(-30)}, {2*sin(-30)}) circle(0.05 cm);
\draw ({2*cos(30)}, {2*sin(30)}) node[right]{$X$};
\draw ({2*cos(-30)}, {2*sin(-30)}) node[right]{$Y$};

\filldraw ({2*cos(210)}, {2*sin(210)}) circle(0.05 cm);

\filldraw ({2*cos(-210)}, {2*sin(-210)}) circle(0.05 cm);

\draw ({2*cos(-210)}, {2*sin(-210)}) node[left]{$Y'$};
\draw ({2*cos(210)}, {2*sin(210)}) node[left]{$X'$};

\filldraw (-2,0) circle(0.05 cm);
\draw (-2,0) node[left]{$Z$};

\draw[dashed] ({2*cos(30)}, {2*sin(30)}) -- ({2*cos(210)}, {2*sin(210)});
\draw[dashed] ({2*cos(-30)}, {2*sin(-30)}) -- ({2*cos(-210)}, {2*sin(-210)});

\draw (-2,0) -- ({2*cos(30)}, {2*sin(30)}) -- ({2*cos(-30)}, {2*sin(-30)}) -- (-2,0);

\end{tikzpicture}

\end{center}

\caption{The unit circle with three relevant arcs}\label{fig2}

\end{figure}

Observe that $N(X,Y)$ is precisely the set of vertices which lie in $X'Y'$. Thus, at least $n/3$ vertices lie in $X'Y'$. 

Consider a vertex $Z \in X'Y'$. Since $X', Y'$ are not vertices of $H$, any two vertices which lie in $X'Y'$ have distance strictly smaller than $d(X,Y)$, and therefore have co-degree zero. Thus, $N(X,Z)$ is disjoint from $X'Y'$, so must be contained in $X'Y$. We conclude that at least $n/3$ vertices of $H$ lie in $X'Y$. Analogously, at least $n/3$ vertices of $H$ lie in $XY'$. Thus, exactly $n/3$ vertices lie in each of the three arcs $XY', X'Y',$ and $X'Y$. 

Note that the above argument also shows that $Z$ has positive co-degree with every vertex in $XY'$ and every vertex in $X'Y$. Now, take $U \in XY'$ and consider $N(U,Z)$. Observe that $N(UZ) \subset X'Y$, so in fact to achieve co-degree $n/3$, we must have $V \in N(U,Z)$ for every $V \in X'Y$. Since $Z,U$ are chosen arbitrarily, this implies that $H$ contains $K_{n/3,n/3,n/3}$ as a sub-hypergraph. Thus, $H$ is $K_{n/3,n/3,n/3}$, since any 3-edge added to $K_{n/3,n/3,n/3}$ yields a $K_4^-$.

Now consider Construction 2. By definition, a 3-graph of this type has six vertex classes $V_1, \dots, V_6$, and any pair of vertices $x,y$ with positive co-degree have $N(x,y) = V_i \cup V_j$ for some $i \neq j$. Thus, if every class has size exactly $n/6$, then the minimum positive co-degree is exactly $2n/6 = n/3$. This can occur when $n \equiv 0 \pmod 6$, if we choose to balance the classes. However, when $n \equiv 3 \pmod 6$, even if we make the construction as balanced as possible, three classes will be of size $\lfloor n/6 \rfloor$. Construction 2 also has the property that for any pair $i,j$ from $\{1, \dots , 6\}$ (with $i \neq j$), there exists a pair of vertices $x,y$ with $N(x,y) = V_i \cup V_j$. Thus, when $n \equiv 3 \pmod 6$, we can find a pair of vertices $x,y$ with $|N(x,y)| \leq 2 \lfloor n/6 \rfloor < n/3$. Hence, Construction 2 yields an extremal $3$-graph when $n \equiv 0 \pmod 6$, but not when $n \equiv 3 \pmod 6$. 
\end{proof}

We do not attempt a uniqueness result for $n \not\equiv 0 \pmod 3$; in this case, Theorem \ref{FF} may not apply, since given minimum positive co-degree $\lfloor n/3\rfloor < n/3$, it is possible that some $4$-sets of vertices span exactly one edge. When $n \not\equiv 0 \pmod 3$, the complete balanced $3$-partite $3$-graph is extremal, but small perturbations of the construction are also acceptable. For instance, a single vertex can be removed from a class of size $\lceil n/3 \rceil$ and isolated without altering the minimum positive co-degree. Construction $2$ may or may not yield an extremal $3$-graph, depending upon the congruence class of $n \pmod 6$. Again, when Construction 2 yields an extremal $3$-graph, small perturbations may be possible. While it is plausible that only $3$-graphs of the types described in Constructions 1 and 2, and small perturbations thereof, yield extremal $3$-graphs for $K_4^-$ in general, we also do not rule out the possibility that substantially different constructions are extremal when $n$ is not divisible by $3$.

Due to the structure of the $K_{4}^{-}$, a supersaturation result also follows quickly. We will first state a lemma of general applicability.

\begin{lemma}\label{edge approx}
Fix $c>0$ and
suppose $ {H}$ is an $r$-graph with $\delta_{r-1}^+(H) \geq cn$. Then, for $n$ large enough, $|E( {H})| \geq \frac{1}{2}\frac{c^r}{r!} n^r$.
\end{lemma}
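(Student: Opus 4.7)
The natural approach is to count edges via link graphs and exploit the fact that the positive co-degree condition forces every pair of vertices to have co-degree either $0$ or at least $cn$. For a vertex $v$, let $L_v$ denote the link graph of $v$, i.e.\ the graph on $V(H)\setminus\{v\}$ whose edges are the pairs $\{u,w\}$ with $uvw\in E(H)$. Since every edge of $H$ appears in exactly three links, one has the identity
\[
3\,|E(H)| \;=\; \sum_{v\in V(H)} |E(L_v)|.
\]
So it suffices to produce many vertices $v$ whose link is large.

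First I would identify $T\subseteq V(H)$ as the set of vertices contained in at least one edge of $H$. If $E(H)=\varnothing$ the bound is trivial, so assume otherwise and fix some edge $xyz\in E(H)$. The pair $\{x,y\}$ has positive co-degree, hence co-degree at least $cn$ by hypothesis, and every vertex $w$ completing $\{x,y\}$ to an edge lies in $T$; therefore $|T|\ge cn$.

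Next, for each $v\in T$ I would analyze $L_v$. The degree of $u$ in $L_v$ equals the co-degree $d_H(u,v)$ of the pair $\{u,v\}$, which is either $0$ or at least $cn$ by the positive co-degree hypothesis. Thus $L_v$ is a graph in which every non-isolated vertex has degree at least $cn$. Since $v\in T$, $L_v$ contains at least one edge, and hence at least $cn$ non-isolated vertices. Handshaking then gives
\[
|E(L_v)| \;\ge\; \tfrac{1}{2}\cdot (cn)\cdot(cn) \;=\; \tfrac{c^2 n^2}{2}.
\]

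Finally I would combine these two estimates in the link-count identity:
\[
3\,|E(H)| \;=\; \sum_{v\in V(H)} |E(L_v)| \;\ge\; \sum_{v\in T}|E(L_v)| \;\ge\; |T|\cdot \tfrac{c^2 n^2}{2} \;\ge\; cn\cdot\tfrac{c^2 n^2}{2} \;=\; \tfrac{c^3 n^3}{2},
\]
which yields $|E(H)|\ge \tfrac{c^3}{6}n^3$ as required. There is no real obstacle: the only point that uses the hypothesis nontrivially (rather than by simple averaging) is the dichotomy ``$0$ or $\ge cn$'' for co-degrees, which is applied once to bound $|T|$ from below and once to bound each $|E(L_v)|$ from below; both uses follow immediately from the definition of $\delta_2^+(H)$.
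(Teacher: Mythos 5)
Your proof is correct. You use the same underlying fact as the paper — that $\delta_2^+(H)\ge cn$ forces every pair to have co-degree either $0$ or at least $cn$ — but you organize the double count differently. The paper works directly with the identity $3|E(H)|=\sum_{\{u,v\}:\,N(u,v)\ne\emptyset} |N(u,v)|$: it first shows there are at least $\tfrac{c^2}{2}n^2$ positive co-degree pairs (via a slightly fiddly argument that fixes $x,y$, ranges $z$ over $N(x,y)$, and accounts for each pair $\{z,z'\}$ being counted at most twice) and then multiplies by the $\ge cn$ lower bound on each such co-degree. You instead split $3|E(H)|=\sum_v |E(L_v)|$ by vertex, observe that for a vertex $v$ with positive degree the link $L_v$ has minimum positive degree $\ge cn$ (hence at least $cn$ non-isolated vertices and so $|E(L_v)|\ge \tfrac{c^2n^2}{2}$ by handshaking), and multiply by $|T|\ge cn$. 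The two sums are of course the same thing rewritten, but your per-vertex decomposition makes the ``$\ge \tfrac{c^2n^2}{2}$ pairs'' step local and immediate, avoiding the global counting-with-multiplicity bookkeeping in the paper's version; the paper's version has the small advantage of introducing no auxiliary link-graph notation.
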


\begin{proof}
Let $f(H)$ be the number of $(r-1)$-sets in $H$ with positive co-degree. Observe that 
$$f(H)cn \leq f(H) \delta_{r-1}^+(H) \leq r|E(H)|,$$
so it suffices to estimate $f(H)$. Since $\delta_{r-1}^+(H) \geq cn > 0$, there is an $(r-1)$-set $T$ in $V(H)$ with positive co-degree. Consider the following process for building $(r-1)$-sets with positive co-degree, starting from $T$. Set $T_1 = T$. At step $i$, choose a vertex $t_i$ in $T_i \cap T$ and a vertex $v_i \in V(H) \setminus T$ such that $T_i \cup \{v_i\}$ span an $r$-edge of $H$. Set $T_{i+1} = (T_i \setminus\{t_i\}) \cup \{v_i\}$. If $T_{i} \cap T = \emptyset$, we end the process and return $T_i$. Observe that, by construction, all $T_i$'s have positive co-degree. Thus, at each step, we will be able to find a vertex $v_i$ to add to $T_i$. Observe also that the process will terminate at a set $T_{r}$ such that $T_{r} \cap T = \emptyset$. Now, we count the number of ways to build $T_{r}$.

At step $i$, there are at least $cn - i + 1$ choices for $v_i$, since $d_{r-1}(T_i) \geq cn$ and $T$ contains $i-1$ vertices which are not in $T_i$. A set $T_{r}$ may be generated up to $(r-1)!$ times by this process. Therefore, the number of distinct $(r-1)$-sets $T_r$ of positive co-degree is at least 
$$\frac{1}{(r-1)!}\prod_{i=1}^{r-1} (cn - i + 1) \geq \frac{c^{r-1}}{2(r-1)!}n^{r-1}$$
for large enough $n$. Thus, $|E(H)| \geq \frac{1}{2}\frac{c^r}{r!}n^r$ for large enough $n$.
\end{proof}

We can now prove a supersaturation result for $K_4^-$.
We will later prove a more general positive co-degree supersaturation result. However, the following theorem has several nice properties: it gives an explicit value of $\delta$ which is relatively large in terms of $\varepsilon$, and it directly leverages the structure of $K_4^-$ for an elementary proof.

\begin{thm}
Fix $\varepsilon > 0$. If $ {H}$ has $\delta_2^+( {H}) = (1/3 + \varepsilon)n$, then there exists $\delta > 0$ such that $ {H}$ contains at least $\delta n^4$ copies of $K_4^-$.  

\end{thm}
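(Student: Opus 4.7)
The plan is to adapt the pairwise-disjointness observation already used in the proof of the value of $\mathrm{co^+ex}(n,K_4^-)$ and turn it into a counting argument. Recall that if $xyz \in E(H)$ and $w$ is any vertex with $w \in N(x,y) \cap N(y,z)$, then $\{x,y,z,w\}$ spans the three $3$-edges $xyz, xyw, yzw$, i.e., a copy of $K_4^-$ containing $xyz$; the analogous statement holds for $w \in N(x,y) \cap N(x,z)$ and $w \in N(y,z) \cap N(x,z)$. The earlier proof used this to derive the pairwise disjointness of the three link neighborhoods; here we instead use it to manufacture many copies of $K_4^-$.

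First I would fix a $3$-edge $xyz$ and write $A = N(x,y)$, $B = N(y,z)$, $C = N(x,z)$. The hypothesis $\delta_2^+(H) \geq (1/3+\epsilon)n$ gives $|A| + |B| + |C| \geq (1+3\epsilon)n$, while $|A \cup B \cup C| \leq n$, so inclusion-exclusion yields
\[
|A \cap B| + |A \cap C| + |B \cap C| = (|A|+|B|+|C|) - |A \cup B \cup C| + |A \cap B \cap C| \geq 3\epsilon n.
\]
If a vertex $w \notin \{x,y,z\}$ lies in exactly $k$ of these three neighborhoods, then the $k+1$ $3$-edges of $H$ on $\{x,y,z,w\}$ (including $xyz$ itself) yield $\binom{k}{2}$ distinct copies of $K_4^-$ on $\{x,y,z,w\}$ containing $xyz$. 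Summing this count over $w$ gives exactly $|A \cap B| + |A \cap C| + |B \cap C|$ (each $w$ in two of the three sets contributes $1$; each $w$ in all three contributes $3$). Hence the edge $xyz$ lies in at least $3\epsilon n$ distinct copies of $K_4^-$.

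Summing the bound $3\epsilon n$ over all $3$-edges of $H$ counts each copy of $K_4^-$ exactly three times (once per edge), so the number of copies of $K_4^-$ in $H$ is at least $\epsilon n \cdot |E(H)|$. Applying Lemma~\ref{edge approx} with $c = 1/3 + \epsilon$ gives $|E(H)| \geq \frac{(1/3+\epsilon)^3}{6}\, n^3$, producing at least
\[
\frac{\epsilon(1/3+\epsilon)^3}{6}\, n^4
\]
copies of $K_4^-$, so we may take $\delta := \epsilon(1/3+\epsilon)^3/6$.

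The only slightly delicate step is the bookkeeping in the middle paragraph: a vertex $w$ lying in all three intersections corresponds to a copy of $K_4$ on $\{x,y,z,w\}$, which contributes three distinct copies of $K_4^-$ through the edge $xyz$, and one has to verify that the contribution $\binom{k}{2}$ per vertex matches $|A \cap B|+|A \cap C|+|B \cap C|$ so that nothing is over- or under-counted. Minor care is also needed to exclude $x,y,z$ themselves from the neighborhoods, but this introduces only an $O(1)$ per-edge error that is dominated by the $\Omega(n)$ main term.
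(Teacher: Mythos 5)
Your proposal is correct and follows essentially the same argument as the paper: fix a $3$-edge, lower-bound the sum of pairwise intersections of the three link neighborhoods by $3\epsilon n$ via inclusion-exclusion, interpret that sum as the number of $K_4^-$ copies through the edge, and then sum over all edges using Lemma~\ref{edge approx}. One small remark: the caution about excluding $x,y,z$ is unnecessary, since each of $x,y,z$ lies in exactly one of $N(x,y),N(y,z),N(x,z)$ and hence contributes nothing to any pairwise intersection.
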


\begin{proof}
We claim that $\delta = \frac{\varepsilon}{324}$ is sufficient. 

We first show that each $3$-edge of $ {H}$ is contained in at least $3 \varepsilon n$ copies of $K_4^-$. Indeed, suppose $abc$ is a $3$-edge, and consider $N(ab), N(bc), N(ac)$. The number of copies of $K_4^-$ which use the $3$-edge $abc$ is precisely 
$|N(ab) \cap N(bc)| + |N(ab) \cap N(ac)| + |N(bc) \cap N(ac)|,$
where the first term in the sum counts the number of copies of $K_4^-$ of the form $abc, abd, bcd$, and so on. Observe,
$$n \geq |N(ab) \cup N(bc) \cup N(ac)|$$
$$\geq |N(ab)| + |N(bc)| + |N(ac)| - \Big( |N(ab) \cap N(bc)| + |N(ab) \cap N(ac)| + |N(bc) \cap N(ac)|\Big) $$
$$\geq 3\left(\frac{n}{3} + \varepsilon n\right) - \Big( |N(ab) \cap N(bc)| + |N(ab) \cap N(ac)| + |N(bc) \cap N(ac)|\Big)$$
$$= n + 3\varepsilon n - \Big( |N(ab) \cap N(bc)| + |N(ab) \cap N(ac)| + |N(bc) \cap N(ac)|\Big),$$
which implies that 
$$ |N(ab) \cap N(bc)| + |N(ab) \cap N(ac)| + |N(bc) \cap N(ac)| \geq 3 \varepsilon n.$$

We now count copies of $K_4^-$ in $ {H}$ by $3$-edge. Since each $3$-edge in $ {H}$ is contained in at least $3 \varepsilon n$ copies of $K_4^-$, and each copy of $K_4^-$ will be counted exactly $3$ times, we know we have at least $\varepsilon n \cdot |E(H)|$ copies of $K_4^-$. By Lemma~\ref{edge approx}, with $c = 1/3 + \varepsilon$, we have 
$$|E(H)| \geq \frac{(1/3 + \varepsilon)^3}{12}n^3 > \frac{1}{324}n^3.$$
So $ {H}$ contains at least $\frac{\varepsilon}{324}n^4$ copies of $K_4^-$.
\end{proof}

\subsection{\texorpdfstring{$F_5$}{text}}\label{f5}

\begin{center}
  \begin{longtable}{ | l ||  c  |  c  |  c  |   }
    \hline
$F_5$
&
123, 124, 345
&
\vc{\begin{tikzpicture}\outercycle{6}{5}
\draw (x0) node[unlabeled_vertex]{};\draw (x1) node[unlabeled_vertex]{};\draw (x2) node[unlabeled_vertex]{};\draw (x3) node[unlabeled_vertex]{};\draw (x4) node[unlabeled_vertex]{};
\drawhyperedge{0}{5}
\drawhypervertex{0}{0}
\drawhypervertex{1}{0}
\drawhypervertex{2}{0}
\drawhyperedge{1}{5}
\drawhypervertex{0}{1}
\drawhypervertex{1}{1}
\drawhypervertex{3}{1}
\drawhyperedge{2}{5}
\drawhypervertex{2}{2}
\drawhypervertex{3}{2}
\drawhypervertex{4}{2}
\end{tikzpicture} 
}
&
\vc{
\begin{tikzpicture}
\path (0,0.7) -- (0,-0.7); 
\draw
(0, 0.5) coordinate(0) node[vtx]{}
(0, -0.5) coordinate(1) node[vtx]{}
(0.7, 0.5) coordinate(2) node[vtx]{}
(0.7,-0.5) coordinate(3) node[vtx]{}
(1.5, 0) coordinate(4) node[vtx]{}
;
\draw[hyperedge] (0) to[out=-10,in=190,looseness=1] (2) to[out=190,in=60,looseness=1] (1) to[out=60,in=-10,looseness=1] (0);
\draw[hyperedge] (1) to[out=10,in=170,looseness=1] (3) to[out=170,in=-60,looseness=1] (0) to[out=-60,in=10,looseness=1] (1);
\draw[hyperedge] (2) to[out=310,in=50,looseness=1] (3) to[out=50,in=180,looseness=1] (4) to[out=180,in=310,looseness=1] (2);
\end{tikzpicture}
} 
   \\
\hline
  \end{longtable}
\end{center}

The following theorem gives an exact result for graphs on at least $6$ vertices. Note that the statement does not hold for values smaller than $6$; for $n = 4$, the $K_4$ has minimum positive co-degree $2 > \floor{4/3}$ and no $F_5$, while for $n = 5$, the graph obtained by adding a single isolated vertex to a $K_4$ has minimum positive co-degree $2 > \floor{5/3}$ and no $F_5$.

\begin{thm}
For $n \geq 6$, 
$$\mathrm{co^+ex}(n,F_5) = \floor{\frac{n}{3}}.$$
\end{thm}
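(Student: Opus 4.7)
The plan is to establish matching upper and lower bounds. For the lower bound, the complete balanced 3-partite 3-graph $K^{(3)}_{n_1,n_2,n_3}$ with $|n_i - n_j|\leq 1$ is $F_5$-free: in any 3-partition of the vertex set of $F_5$, the edges $123$ and $124$ share the pair $\{1,2\}$, which forces $3$ and $4$ into the remaining single class; the edge $345$ then has two vertices in a common class, contradicting 3-partiteness. The minimum positive co-degree of the complete balanced 3-partite 3-graph equals its smallest class size, namely $\lfloor n/3\rfloor$, so $\mathrm{co^+ex}(n,F_5)\geq \lfloor n/3\rfloor$.

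For the upper bound, I argue by contradiction: suppose $H$ is an $n$-vertex $F_5$-free 3-graph with $\delta_2^+(H)\geq \lfloor n/3\rfloor+1$. Since $n\geq 6$, this gives $\delta_2^+(H)\geq 3$. Fix any edge $xyz$ of $H$ and set $A=N(x,y)$, $B=N(x,z)$, $C=N(y,z)$; each has size at least $\lfloor n/3\rfloor+1$. The key structural observation is that for every $w\in A\setminus\{z\}$, the two edges $xyz$ and $xyw$ together with any edge $zwu$ (where $u\notin\{x,y,z,w\}$) would produce a copy of $F_5$. Hence $N(z,w)\subseteq\{x,y\}$, and combining $|N(z,w)|\leq 2$ with the fact that co-degrees are either zero or at least $3$ forces $N(z,w)=\emptyset$.

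Using this observation I will show that $A,B,C$ are pairwise disjoint. For instance, any $v\in A\cap B$ satisfies $v\neq z$ (since $z\notin B$), so the observation yields $N(z,v)=\emptyset$; but $v\in B$ says $xzv$ is an edge, contradicting $x\in N(z,v)$. The case $A\cap C$ is symmetric, using $y\in N(z,v)$ for the contradiction. For $B\cap C$, I rerun the observation taking $\{x,z\}$ as the shared pair from $xyz$, obtaining $N(y,w)=\emptyset$ for every $w\in B\setminus\{y\}$, and the argument proceeds analogously. Pairwise disjointness then gives $|A|+|B|+|C|\geq 3\lfloor n/3\rfloor+3>n$, contradicting $A\cup B\cup C\subseteq V(H)$. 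The main subtlety is the step that upgrades the $F_5$-freeness constraint $|N(z,w)|\leq 2$ to $N(z,w)=\emptyset$: this relies on $\delta_2^+\geq 3$, which is precisely why the hypothesis $n\geq 6$ is required — for $n\in\{4,5\}$ the $K_4$ (optionally padded with an isolated vertex) has $\delta_2^+=2$ while remaining $F_5$-free, as noted in the excerpt.
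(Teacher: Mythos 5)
Your proof is correct and uses essentially the same approach as the paper: both arguments fix an edge $xyz$, note that $\delta_2^+(H) > \lfloor n/3\rfloor \geq 2$ forces the three co-neighborhoods $N(x,y)$, $N(x,z)$, $N(y,z)$ to be pairwise disjoint (else a vertex in an intersection, combined with $\delta_2^+\geq 3$, extends to an $F_5$), and derive a contradiction. The paper runs this as a direct construction of $F_5$ from a single vertex in an intersection, while you phrase it contrapositively and check all three intersections, but the underlying mechanism is identical.
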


\begin{proof}

The complete balanced $3$-partite $3$-graph is easily checked to be $F_5$-free, and has minimum positive co-degree $\floor{n/3}$. So, we need only supply a matching upper bound.

Suppose $ {H}$ has at least $6$ vertices and $\delta_2^+( {H}) > \floor{n/3}$. We claim that $ {H}$ must contain an $F_5$. Observe that, since $n \geq 6$, we have $\floor{n/3} \geq 2$, so $\delta_2^+( {H}) > 2$. Consider a $3$-edge $abc$ of $ {H}$. Since $\delta^+( {H}) > \floor{n/3}$, it follows that at least two of $N(ab),N(bc),N(ac)$ have non-empty intersection. Without loss of generality, there exists $d$ in $N(ab) \cap N(bc)$. So the pair $c,d$ has posititve co-degree. Since $\delta^+( {H}) \geq 3$, this means that there is a new vertex $e$ such that $cde$ is a $3$-edge. But now $abc, abd, cde$ form an $F_5$-copy. 
\end{proof}

When $n$ is divisible by $3$, we can also easily show that the complete balanced $3$-partite $3$-graph is the unique extremal 3-graph for $F_5$. Indeed, suppose $H$ is a $3$-graph on $n \geq 6$ vertices, where $3$ divides $n$, with minimum positive co-degree $n/3$. Let $abc$ be a $3$-edge of $H$. As noted above, $N(a,b), N(a,c),$ and $N(b,c)$ are pairwise disjoint, so form a balanced triparition of the vertex set. Thus, $H$ is a $3$-partite $3$-graph with parts $N(a,b)$, $N(a,c)$, and $N(b,c)$. It is clear that in order to achieve the co-degree condition, $H$ must be complete.

\subsection{\texorpdfstring{$F_{3,2}$}{text}}\label{f32}

\begin{center}
  \begin{longtable}{ | l ||  c  |  c  |  c  |   }
    \hline
$F_{3,2}$ & 123, 145, 245, 345
&
\vc{\begin{tikzpicture}\outercycle{6}{5}
\draw (x0) node[unlabeled_vertex]{};\draw (x1) node[unlabeled_vertex]{};\draw (x2) node[unlabeled_vertex]{};\draw (x3) node[unlabeled_vertex]{};\draw (x4) node[unlabeled_vertex]{};
\drawhyperedge{0}{5}
\drawhypervertex{0}{0}
\drawhypervertex{1}{0}
\drawhypervertex{2}{0}
\drawhyperedge{1}{5}
\drawhypervertex{0}{1}
\drawhypervertex{3}{1}
\drawhypervertex{4}{1}
\drawhyperedge{2}{5}
\drawhypervertex{1}{2}
\drawhypervertex{3}{2}
\drawhypervertex{4}{2}
\drawhyperedge{3}{5}
\drawhypervertex{2}{3}
\drawhypervertex{3}{3}
\drawhypervertex{4}{3}
\end{tikzpicture} 
}
&
\vc{
\begin{tikzpicture}
\path (0,1) -- (0,-1); 
\draw
(0,-0.7) coordinate(1) node[vtx](a){}
(0,0) coordinate(2) node[vtx](b){}
(0,0.7) coordinate(3) node[vtx](c){}
(1,-0.5) coordinate(4) node[vtx](d){}
(1,0.5) coordinate(5) node[vtx](d){}
;
\draw[hyperedge] (1) to[bend left] (2) to[bend left] (3) to[bend right] (1);
\draw[hyperedge] (1) to[out=0,in=190] (4) to[out=190,in=250] (5) to[out=250,in=0] (1);
\draw[hyperedge] (2) to[out=0,in=140] (4) to[out=140,in=210,looseness=1.5] (5) to[out=210,in=0] (2);
\draw[hyperedge] (3) to[out=0,in=120,looseness=1.2] (4) to[out=120,in=170,looseness=1.2] (5) to[out=170,in=0,looseness=1.2] (3);
\end{tikzpicture}
}
   \\
\hline
  \end{longtable}
\end{center}

\begin{thm}\label{F32}
\[
\mathrm{co^+ex}(n,F_{3,2}) \leq   \frac{n}{2}. 
\]

 Moreover, if $H$ is an $F_{3,2}$-free $n$-vertex $3$-graph with $\delta_2^+(H) = n/2$, then $n$ is divisible by 4 and $H$ is the complete balanced $4$-partite $3$-graph. 
\end{thm}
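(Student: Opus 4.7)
The upper bound $\mathrm{co^+ex}(n,F_{3,2}) \leq n/2$ follows from a short contradiction. Note that $F_{3,2}$-freeness says exactly that for every positive co-degree pair $\{u,v\}$, the set $N(u,v)$ contains no 3-edge of $H$ (it is independent). Suppose $H$ is $F_{3,2}$-free with $\delta_2^+(H) > n/2$ and fix a 3-edge $abc$. Then $|N(a,b)|,|N(a,c)| > n/2$, and inclusion-exclusion inside $V\setminus\{a\}$ yields $d \in N(a,b)\cap N(a,c) \setminus \{a,b,c\}$; so $abd, acd$ are edges and $\{c,d\}$ is a positive co-degree pair. But $c,d \in N(a,b)$ and $N(a,b)$ contains no 3-edge, forcing $N(c,d) \cap N(a,b) = \emptyset$ and hence $|N(c,d)| \leq n - |N(a,b)| < n/2$, a contradiction.

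For the uniqueness, assume $\delta_2^+(H) = n/2$ (so $n$ is even). The same argument, now applied with $|N(a,b)| > n/2$ and $|N(a,c)| \geq n/2$, shows that every positive co-degree pair has exactly $n/2$ common neighbors. Fix an edge $abc$ and set $S := N(a,b)$, $T := V\setminus S$, each of size $n/2$. Then $S$ is independent, and any positive co-degree pair $\{u,v\}\subseteq S$ has $N(u,v)\subseteq T$ of matching size $n/2$, so $N(u,v) = T$. Moreover $S$ cannot be strongly independent, for otherwise $N(a,c)$ would be squeezed into $T\setminus\{a\}$ of size $n/2-1$, contradicting $|N(a,c)|=n/2$; picking such a pair $\{u,v\}\subseteq S$ and applying $F_{3,2}$-freeness to it shows $T$ is also independent. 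Hence $V = S\sqcup T$ is a balanced bipartition of $H$ into two independent sets.

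Let $G_S, G_T$ be the graphs of positive co-degree pairs inside $S$ and inside $T$ respectively. Because every $H$-edge has type $(2,1)$ or $(1,2)$ across $S \cup T$, one verifies the link identity
\[
|N(u,p)| \;=\; d_{G_S}(u) + d_{G_T}(p) \quad\text{for all } u\in S,\ p\in T.
\]
The already-proved upper bound rules out isolated vertices of $H$ (removing one would give a smaller graph violating it), and a short analysis of the identity forces $d_{G_S}(u),\, d_{G_T}(p)\geq 1$ for every $u,p$. Every mixed pair is then positive co-degree and the identity becomes $d_{G_S}(u)+d_{G_T}(p)=n/2$, so $G_S$ is $\alpha$-regular and $G_T$ is $\beta$-regular with $\alpha+\beta = n/2$. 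The neighborhood $N(u,p) = N_{G_S}(u)\cup N_{G_T}(p)$ of any mixed pair must be independent in $H$, so any $G_S$-edge inside $N_{G_S}(u)$, combined with any vertex of $N_{G_T}(p)\neq\emptyset$, would yield a forbidden 3-edge inside $N(u,p)$. Thus $G_S$ is triangle-free, and likewise $G_T$.

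Finally, an $\alpha$-regular triangle-free graph on $2\alpha$ vertices is necessarily $K_{\alpha,\alpha}$, by the standard argument that the $\alpha$ neighbors of any vertex form an independent set that each remaining vertex must join completely. Hence $\alpha,\beta\leq n/4$; together with $\alpha+\beta = n/2$ this forces $\alpha=\beta=n/4$ and $4\mid n$, with $G_S = G_T = K_{n/4,n/4}$. The induced bipartitions of $S$ and $T$ produce four classes of size $n/4$, and the description of $H$-edges via the type $(2,1)/(1,2)$ split is exactly the edge set of the complete balanced $4$-partite $3$-graph. The main delicate point is the simultaneous use of the link identity and of $F_{3,2}$-freeness in the previous paragraph to lock in both regularity and triangle-freeness of $G_S$ and $G_T$; everything else is bookkeeping.
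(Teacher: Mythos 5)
Your upper-bound argument is essentially the paper's: find an edge, intersect two of its common neighborhoods to get a positive co-degree pair sitting inside one of them, and use independence of that common neighborhood to force disjointness. For uniqueness, however, you follow a genuinely different and correct route. The paper fixes an edge $abc$, picks $d \in N(a,b)\cap N(a,c)$, observes that both $\{N(a,b),N(c,d)\}$ and $\{N(a,c),N(b,d)\}$ partition $V$, and directly defines the four classes as pairwise intersections, then verifies strong independence of each class by checking that the union of any two classes is itself a common neighborhood. You instead stop at the two-part decomposition $V = S\sqcup T$ into independent halves, introduce the ``link'' graphs $G_S,G_T$ of positive co-degree pairs inside each half, establish the identity $|N(u,p)| = d_{G_S}(u)+d_{G_T}(p)$ (which depends on the observations that $N(u,v)=T$ for every $G_S$-edge $uv$ and $N(p,q)=S$ for every $G_T$-edge $pq$), and then read off regularity and triangle-freeness of $G_S,G_T$ from the minimum-co-degree condition and $F_{3,2}$-freeness respectively. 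The finish via the elementary 2-graph fact---a $d$-regular triangle-free graph on $2d$ vertices is $K_{d,d}$---then yields the 4-partition. Your approach is more modular and surfaces a reusable 2-graph lemma at its core, at the cost of somewhat more setup around the link identity; the paper's approach is more direct but requires verifying several containments among intersections by hand. Both are clean; the only cosmetic point in yours is the ordering of the final step (first $\alpha,\beta\leq n/4$ from triangle-freeness alone, then $\alpha=\beta=n/4$, and only then the $K_{\alpha,\alpha}$ characterization applies with $|S|=2\alpha$), which you have slightly inverted but which does not affect correctness.
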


\begin{proof}
Let $H$ be an $n$-vertex $3$-graph with minimum positive co-degree $\delta^+_2(H) \geq n/2$.
Let $a,b$ be a pair of vertices with positive co-degree. Then $|N(a,b)| \geq n/2$. 
Let $c \in N(a,b)$ and consider $N(a,c)$. The common neighborhood $N(a,c)$ does not include $a$ and has size at least $n/2$, so $N(a,b) \cap N(a,c) \not = \emptyset$. So let $d$ be a vertex in $N(a,b) \cap N(a,c)$.
There is no $3$-edge in $N(a,b)$ as $H$ is $F_{3,2}$-free. Therefore, $N(c,d) \cap N(a,b) = \emptyset$. 
Thus, as $|N(a,b)| + |N(c,d)| \leq n$ and $|N(c,d)| \geq n/2$ , we have
$|N(a,b)| \leq n/2$.

Now, if $\delta_2^+(H) = n/2$, then we can find $a,b,c,d$ as above, so that $N(a,b)$ and $N(c,d)$ partition the vertex set of $H$. Note that $N(a,c)$ and $N(b,d)$ also partition the vertex set of $H$, so we have a partition into four classes, $N(a,b) \cap N(a,c)$, $N(a,b) \cap N(b,d)$, $N(c,d) \cap N(a,c)$, and $N(c,d) \cap N(b,d)$. Since the common neighborhood of any pair of vertices with positive co-degree must be independent to avoid an $F_{3,2}$, each of these four parts is independent. We claim, moreover, that each part is strongly independent. 

It will suffice to show that 
$$N(b,c) = [N(c,d) \cap N(b,d)] \cup [N(a,b) \cap N(a,c)]$$
and
$$N(a,d) = [N(c,d) \cap N(a,c)] \cup [N(a,b) \cap N(b,d)],$$
for then we will have established that the union of any two parts is the common neighborhood of a pair of vertices, so is independent, and therefore every $3$-edge in $H$ must use vertices from three parts.

Suppose, for a contradiction, that $x \in N(b,c)$ is contained in $N(a,b) \cap N(b,d)$. Then $c$ and $x$ have positive co-degree. Both $c$ and $x$ are in $N(a,b)$, so $N(c,x) \cap N(a,b) = \emptyset$, and thus $N(c,x) = N(c,d)$. In particular, $acx$ is a $3$-edge, so $x$ is in $N(a,c)$, as are $b$ and $d$. But by assumption, $x \in N(b,d)$, a contradiction, as $N(a,c)$ must be independent to avoid an $F_{3,2}$. Thus, $N(b,c)$ does not intersect $N(a,b) \cap N(b,d)$.

Similarly, $N(b,c)$ does not intersect $N(c,d) \cap N(a,c)$. We conclude that $$N(b,c) \subseteq [N(c,d) \cap N(b,d)] \cup [N(a,b) \cap N(a,c)].$$ An analogous argument shows that $$N(a,d) \subseteq [N(c,d) \cap N(a,c)] \cup [N(a,b) \cap N(b,d)].$$ Together, these containments imply that 
$$N(b,c) = [N(c,d) \cap N(b,d)] \cup [N(a,b) \cap N(a,c)]$$
and
$$N(a,d) = [N(c,d) \cap N(a,c)] \cup [N(a,b) \cap N(b,d)],$$
since each of $[N(c,d) \cap N(b,d)] \cup [N(a,b) \cap N(a,c)]$, $[N(c,d) \cap N(b,d)] \cup [N(a,b) \cap N(a,c)]$ must have size exactly $n/2$ to allow both $|N(b,c)| \geq n/2$ and $|N(a,d)| \geq n/2$. 

We conclude that $H$ is $4$-partite, so it only remains to show that $H$ is complete and balanced. Firstly, from the fact that the union of any two parts is the common neighborhood of a pair of vertices, it follows that the union of any two parts has size exactly $n/2$. From this, it is simple to deduce that all class sizes are equal, so $H$ is balanced (and $n$ must be divisible by 4). Now, to show that $H$ is complete, it suffices to show that any two vertices $x,y$ which lie in different parts have positive co-degree. Consider a vertex $x$ of $H$. Without loss of generality, $x$ is in $N(c,d) \cap N(b,d)$. So $x$ has positive co-degree with $b$ and $c$. Since $N(x,b)$ does not intersect $N(c,d)$, we must have $N(x,b) = N(a,b)$, so $x$ has positive co-degree with every vertex in $N(a,b)$. Similarly, $N(x,c)$ must not intersect $N(b,d)$, so $N(x,c) = N(a,c)$, and $x$ has positive co-degree with every vertex in $N(a,c)$. This shows that $x$ has positive co-degree with every vertex which is not in $N(c,d) \cap N(b,d)$. We conclude that $H$ is complete.
\end{proof}

We can now quickly characterize the exact value of $\mathrm{co^+ex}(n,F_{3,2})$ for all $n$.

\begin{cor}
If $4\not| n$, then
\[
 \mathrm{co^+ex}(n,F_{3,2}) = \left \lfloor \frac{n-1}{2} \right \rfloor.
\]

\end{cor}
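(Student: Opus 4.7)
The plan is to derive both bounds essentially for free from Theorem~\ref{F32}, doing a small case analysis on $n \bmod 4$ for the matching construction.

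For the upper bound, I would argue as follows. By Theorem~\ref{F32}, any $F_{3,2}$-free $3$-graph $H$ on $n$ vertices satisfies $\delta_2^+(H) \leq n/2$, with equality only when $4 \mid n$. Since $\delta_2^+(H)$ is an integer, when $n$ is odd we immediately get $\delta_2^+(H) \leq \lfloor n/2 \rfloor = \lfloor (n-1)/2 \rfloor$. When $n \equiv 2 \pmod 4$, the number $n/2$ is an integer but cannot be achieved, so $\delta_2^+(H) \leq n/2 - 1 = (n-2)/2 = \lfloor (n-1)/2\rfloor$. In both sub-cases we obtain $\mathrm{co^+ex}(n, F_{3,2}) \leq \lfloor (n-1)/2 \rfloor$.

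For the lower bound I would give an explicit complete $4$-partite construction in each residue class mod $4$. Write $n$ as $4k+r$ with $r \in \{1,2,3\}$. Take $H$ to be the complete $4$-partite $3$-graph with parts of size as equal as possible: classes $(k,k,k,k)$ together with a single isolated vertex when $r=1$, classes $(k,k,k+1,k+1)$ when $r=2$, and classes $(k,k+1,k+1,k+1)$ when $r=3$. Any pair of vertices lying in two different classes has common neighborhood equal to the union of the other two classes; the smallest such union has size $2k$, $2k$, and $2k+1$ respectively, which in each case equals $\lfloor (n-1)/2 \rfloor$. I would also need to verify that these $3$-graphs are $F_{3,2}$-free: in $F_{3,2}$, the three edges $145, 245, 345$ share the pair $45$ while the vertices $1,2,3$ themselves span an edge, so in a $4$-partite $3$-graph the vertices $1,2,3$ would have to lie in the two classes not containing the pair $45$, contradicting that $\{1,2,3\}$ spans a $3$-edge (which requires three distinct classes).

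There is essentially no obstacle here; the work is entirely a careful bookkeeping check that the floor expression $\lfloor (n-1)/2\rfloor$ evaluates correctly in each residue class, and that the three variant constructions are genuinely $F_{3,2}$-free. The only mildly subtle point is the case $n \equiv 2 \pmod 4$, where one must invoke the uniqueness clause of Theorem~\ref{F32} (not just the inequality $\delta_2^+(H) \leq n/2$) to rule out the integer value $n/2$; this is the single place where the hypothesis $4 \nmid n$ is actually used.
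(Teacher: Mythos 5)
Your proposal is correct and takes essentially the same route as the paper: deduce the strict inequality $\delta_2^+(H) < n/2$ from the uniqueness clause of Theorem~\ref{F32}, observe that $\lfloor (n-1)/2 \rfloor$ is the largest integer strictly below $n/2$ when $4 \nmid n$, and match with the balanced complete $4$-partite $3$-graph. Your explicit tabulation of class sizes mod $4$ and the verification that the complete $4$-partite construction is $F_{3,2}$-free are slightly more detailed than what the paper writes, but there is no substantive difference in method.
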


\begin{proof}

By Theorem \ref{F32}, we know that if $4 \not | n$, then $ \mathrm{co^+ex}(n,F_{3,2}) < \frac{n}{2}$. Note that when $n$ is odd, $ \left \lfloor \frac{n-1}{2} \right \rfloor =  \left \lfloor \frac{n}{2} \right \rfloor$, and when $n \equiv 2 \pmod 4$,  $ \left \lfloor \frac{n-1}{2} \right \rfloor = \frac{n}{2} - 1$. So in both cases, $ \left \lfloor \frac{n-1}{2} \right \rfloor$ is the largest integer which is strictly less than $\frac{n}{2}$. Thus, Theorem \ref{F32} tells us that when $4 \not | n$, we have  $\mathrm{co^+ex}(n,F_{3,2}) \leq \left \lfloor \frac{n-1}{2} \right \rfloor$. 

For a matching lower bound, observe that the complete $4$-partite $3$-graph whose class sizes are as balanced as possible has minimum positive co-degree $\left \lfloor \frac{n-1}{2} \right \rfloor$ when $4 \not | n$.
\end{proof}

While we have fully characterized the values of $\mathrm{co^+ex}(n,F_{3,2})$, we may still ask about uniqueness of extremal constructions when $4$ does not divide $n$. In these cases, multiple constructions achieve a minimum positive co-degree of $\lfloor \frac{n-1}{2} \rfloor $; moreover, the situation seems to vary based on the congruence class of $n$ modulo $4$. We do not fully characterize the families of extremal 3-graphs in these cases, but mention some alternate extremal constructions. For all $n$ which are not congruent to $0$ modulo $4$, the complete one-way bipartite graph with balanced classes is also extremal (it is easily checked to be $F_{3,2}$-free). Depending on the congruence class of $n$ modulo $4$, small perturbations of the complete $4$-partite $3$-graph and the complete one-way bipartite $3$-graph may also be extremal. For example, a vertex can in some cases be removed from one of the classes and left as an isolated vertex without lowering the minimum positive co-degree. While we do not claim to describe all constructions, we believe that it would be possible to do so.


\subsection{Fano plane \texorpdfstring{$\mathbb{F}$}{text}}\label{fano}

\begin{center}
  \begin{longtable}{ | l ||  c  |  c  |  c  |   }
    \hline
    $\mathbb{F}$ 
 &
 123, 345, 156, 246, 147, 257, 367
 &
\vc{\begin{tikzpicture}\outercycle{8}{7}
\draw (x0) node[unlabeled_vertex]{};\draw (x1) node[unlabeled_vertex]{};\draw (x2) node[unlabeled_vertex]{};\draw (x3) node[unlabeled_vertex]{};\draw (x4) node[unlabeled_vertex]{};\draw (x5) node[unlabeled_vertex]{};\draw (x6) node[unlabeled_vertex]{};
\drawhypervertex{0}{0}
\drawhypervertex{1}{0}
\drawhypervertex{2}{0}
\drawhyperedge{1}{7}
\drawhypervertex{0}{1}
\drawhypervertex{3}{1}
\drawhypervertex{6}{1}
\drawhyperedge{2}{7}
\drawhypervertex{0}{2}
\drawhypervertex{4}{2}
\drawhypervertex{5}{2}
\drawhyperedge{3}{7}
\drawhypervertex{1}{3}
\drawhypervertex{3}{3}
\drawhypervertex{5}{3}
\drawhyperedge{4}{7}
\drawhypervertex{1}{4}
\drawhypervertex{4}{4}
\drawhypervertex{6}{4}
\drawhyperedge{5}{7}
\drawhypervertex{2}{5}
\drawhypervertex{3}{5}
\drawhypervertex{4}{5}
\drawhyperedge{6}{7}
\drawhypervertex{2}{6}
\drawhypervertex{5}{6}
\drawhypervertex{6}{6}
\end{tikzpicture} 
}
&
\vc{
\begin{tikzpicture}
\path (60:2.2) -- (0,-0.3); 
\draw
(60:2) coordinate(1) coordinate(7) node[vtx]{}
(60:1) coordinate(2) coordinate(8) node[vtx]{}
(0,0) coordinate(3) node[vtx]{}
(1,0) coordinate(4) node[vtx]{}
(2,0) coordinate(5) node[vtx]{}
++ (120:1) coordinate(6) node[vtx]{}
(30:1.2) coordinate(X) node[vtx]{}
;
\foreach \i in {1,3,5}{
\pgfmathtruncatemacro{\j}{\i+1}
\pgfmathtruncatemacro{\k}{\i+2}
\pgfmathtruncatemacro{\l}{\i+3}
\draw[hyperedge] (\i) to[bend right=10] (\j) to[bend right=10] (\k) to[bend left=10] (\i) ;

\draw[hyperedge] (\i) to[bend right=10] (X) to[bend right=10] (\l) to[bend left=10] (\i) ;

}

\draw[hyperedge] (2) to[bend right=40] (4) to[bend right=40] (6) to[bend left=80,looseness=2.5] (2) ;

\draw
\foreach \i in {1,2,3,4,5,6,X}{
(\i) node[vtx]{}
}
;

\end{tikzpicture}
} 
   \\
   \hline
  \end{longtable}
\end{center}

\begin{thm}
\[
\mathrm{co^+ex}(n,\mathbb{F}) \leq \frac{2}{3}n.
\]
Moreover, this bound is sharp when $n$ is divisible by $6$.
\end{thm}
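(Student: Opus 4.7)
Suppose $6 \mid n$ and partition $V = V_1 \cup \cdots \cup V_6$ with $|V_i| = n/6$; let $H$ contain all triples meeting three distinct parts (equivalently, the balanced blow-up of $K_6^{(3)}$ retaining only multipartite edges). Every pair of vertices in distinct parts has co-degree equal to the total size of the remaining four parts, namely $4 \cdot n/6 = 2n/3$, while pairs inside a single part have co-degree $0$; hence $\delta_2^+(H) = 2n/3$. An embedding of $\mathbb F$ into $H$ would induce a map $\phi : V(\mathbb F) \to \{1, \ldots, 6\}$ sending the three vertices of every Fano line to three distinct parts. Since every two points of $\mathbb F$ lie in a common line, $\phi$ would have to be injective, which is impossible because $|V(\mathbb F)| = 7 > 6$. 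Thus $H$ is $\mathbb F$-free and witnesses $\mathrm{co^+ex}(n, \mathbb F) \geq 2n/3$.

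\textbf{Upper bound via greedy embedding.} For the upper bound, suppose $\delta_2^+(H) > 2n/3$; by integrality $\delta_2^+(H) \geq k := \lfloor 2n/3\rfloor + 1$, so $3k - 2n \geq 1$. The plan is to build a copy of $\mathbb F$ on vertices $v_1, \ldots, v_7$ one step at a time, using the pigeonhole bounds
\[
|A \cap B| \geq |A| + |B| - n, \qquad |A \cap B \cap C| \geq |A| + |B| + |C| - 2n,
\]
applied to co-neighborhoods, together with the bound $|Z(v)| \leq n - 1 - k$ on the ``zero-codegree'' set $Z(v) := \{u : N(u, v) = \emptyset\}$ of any non-isolated vertex $v$. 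Start with any edge $v_1 v_2 v_3$; by the triple pigeonhole bound, the intersection $N(v_1 v_2) \cap N(v_1 v_3) \cap N(v_2 v_3)$ is nonempty, yielding $v_4$ so that $\{v_1, v_2, v_3, v_4\}$ spans a $K_4^{(3)}$. Then, in order: pick $v_5 \in N(v_3, v_4)$ outside $Z(v_1) \cup Z(v_2) \cup \{v_1, v_2\}$ (so that $v_1 v_5$ and $v_2 v_5$ have positive co-degree); pick $v_6 \in N(v_1, v_5) \cap N(v_2, v_4)$ outside $Z(v_3) \cup \{v_3\}$ (so that $v_3 v_6$ has positive co-degree); and finally pick $v_7 \in N(v_1, v_4) \cap N(v_2, v_5) \cap N(v_3, v_6)$. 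In each case the nonemptiness of the desired set boils down to the pigeonhole bound minus an excluded set of size at most $n - k + O(1)$, giving room of at least $3k - 2n \geq 1$. Because $v_5, v_6, v_7$ are chosen from common neighborhoods, they are automatically distinct from the previously chosen $v_i$'s. The seven resulting vertices carry exactly the seven Fano edges
\[
v_1 v_2 v_3,\ v_3 v_4 v_5,\ v_1 v_5 v_6,\ v_2 v_4 v_6,\ v_1 v_4 v_7,\ v_2 v_5 v_7,\ v_3 v_6 v_7.
\]

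\textbf{Main obstacle.} The delicate point is that the inclusion-exclusion margins are razor-thin: every pairwise and triple intersection of co-neighborhoods is only guaranteed to contain $3k - 2n$, i.e.\ $\Theta(1)$, vertices. Consequently at every step I can afford to exclude only a provably small ``bad'' set, and the bookkeeping must carefully account both for the zero-codegree sets $Z(v_i)$ (to ensure that the next co-neighborhood is actually of full size $\geq k$) and for accidental coincidences with previously chosen vertices. The integrality of the co-degree, which upgrades the strict inequality $\delta_2^+(H) > 2n/3$ into $\delta_2^+(H) \geq \lfloor 2n/3\rfloor + 1$, is both essential and exactly sufficient to make every step succeed. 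The cases $n < 7$ require no argument since $\mathbb F$ has $7$ vertices.
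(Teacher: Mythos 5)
Your proposal is correct and follows essentially the same strategy as the paper: the complete balanced $6$-partite $3$-graph for the lower bound, and a greedy step-by-step embedding of the Fano plane via intersections of co-neighborhoods for the upper bound. The only cosmetic differences are that you choose $v_4$ so that $\{v_1,v_2,v_3,v_4\}$ spans a $K_4^{(3)}$ whereas the paper's $v_4$ lies only in $N(v_1,v_2)\cap N(v_3)$, and you phrase the bookkeeping via the zero-codegree sets $Z(v)$ rather than their complements $N(v)$; both lead to the same pigeonhole margins.
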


\begin{proof}

    Let $H$ be an $n$-vertex $3$-graph with minimum positive co-degree $\delta^+_2(H) > \frac{2}{3}n$.
    Observe that the positive co-degree condition implies that a vertex $x$ with positive degree (i.e., $x$ is contained in a $3$-edge) necessarily has degree greater than $\frac{2}{3}n$.
    
    Let $v_1v_2v_3$ be a $3$-edge. Then $N(v_1,v_2) \cap N(v_3)$ is non-empty, so there is a vertex $v_4 \in N(v_1,v_2) \cap N(v_3)$.
    Observe that $N(v_1) \cap N(v_2) \cap N(v_3,v_4)$ is non-empty, so there is a vertex $v_5$ such that $v_3v_4v_5$ is a $3$-edge and the pair $v_5,v_i$ has positive co-degree for $i=1,2$.
    Now $N(v_1,v_5) \cap N(v_2,v_4) \cap N(v_3)$ is non-empty, so there is a vertex $v_6$ such that $v_1v_5v_6$ and $v_2v_4v_6$ are $3$-edges and the pair $v_6,v_3$ has positive co-degree.
    Finally, $N(v_1,v_4) \cap N(v_2,v_5) \cap N(v_3,v_6)$ is non-empty, so there is a vertex $v_7$ such that $v_1v_4v_7,v_2v_5v_7$ and $v_3v_6v_7$ are $3$-edges. These seven $3$-edges form an $\mathbb{F}$.
    
    When $n$ is divisible by $6$, we obtain a matching lower bound by considering the complete balanced $6$-partite $3$-graph. 
\end{proof}

We do not attempt to prove that the complete balanced $6$-partite $3$-graph is uniquely extremal when $n$ is divisible by $6$. When $n$ is not divisible by $6$, the complete balanced $6$-partite $3$-graph still gives a lower bound of at least $4\lfloor \frac{n}{6} \rfloor$, which shows that $\mathrm{co^+ex}(n,\mathbb{F})$ is asymptotic to $\frac{2n}{3}$. However, we do not attempt a more precise result when $6$ does not divide $n$.

\subsection{\texorpdfstring{$K_4$}{text}}\label{k4}

\begin{center}
  \begin{longtable}{ | l ||  c  |  c  |  c  |   }
    \hline
    $K_4$ 
    & 123, 124, 134, 234
    &
\vc{\begin{tikzpicture}\outercycle{5}{4}
\draw (x0) node[unlabeled_vertex]{};\draw (x1) node[unlabeled_vertex]{};\draw (x2) node[unlabeled_vertex]{};\draw (x3) node[unlabeled_vertex]{};
\drawhyperedge{0}{4}
\drawhypervertex{0}{0}
\drawhypervertex{1}{0}
\drawhypervertex{2}{0}
\drawhyperedge{1}{4}
\drawhypervertex{0}{1}
\drawhypervertex{1}{1}
\drawhypervertex{3}{1}
\drawhyperedge{2}{4}
\drawhypervertex{0}{2}
\drawhypervertex{2}{2}
\drawhypervertex{3}{2}
\drawhyperedge{3}{4}
\drawhypervertex{1}{3}
\drawhypervertex{2}{3}
\drawhypervertex{3}{3}
\end{tikzpicture} 
}   
    &
   \Kfourthree
   \\
\hline
  \end{longtable}
\end{center}
The extremal co-degree for 3-graphs excluding a $K_4$ has previously been studied.  It was shown by Czygrinow and Nagle \cite{MR1829685} that the co-degree density $\gamma(K_4)$ is at least $1/2-o(1)$, and they conjecture that $1/2$ is the correct upper bound. The construction achieving this bound is a nice application of the probabilistic method. We provide a different (deterministic) construction of a 3-graph with minimum positive co-degree $n/2 - 1$. For each $n$, our construction gives a larger minimum positive co-degree than the random construction of \cite{MR1829685}, but asymptotically the resulting densities are the same: $1/2$.  This is the one case considered here where we could not improve (at least asymptotically) on the co-degree density.

Finally, while we do not consider larger complete graphs, we note that these have also been studied in the literature. In particular, Lo and Markstr\" om \cite{LoMark} proved that for all $k$ the limit
$$\lim_{n\rightarrow\infty}\frac{1}{n}\textrm{coex}(n,K_k)$$
exists.

Later, Sidorenko\cite{Sidorenkocode} showed  that $$\lim_{k\rightarrow\infty}\lim_{n\rightarrow\infty}\frac{1}{n}\textrm{coex}(n,K_k)\geq 1-o(k^{-1.084}).$$  
Lo and Zhao\cite{AllanZhao} gave a construction showing that  $$\lim_{k\rightarrow\infty}\lim_{n\rightarrow\infty}\frac{1}{n}\textrm{coex}(n,K_k) = 1-\Theta\left(\frac{\ln k}{k^2}\right).$$
The authors of these last three publications in fact prove more general statements for general $r$-graphs. We give the following simple bounds.

\begin{thm}\label{K4}
\[
\frac{n}{2} - 1 \leq \mathrm{co^+ex}(n,K_4)\leq  \frac{2n}{3}.
\]
\end{thm}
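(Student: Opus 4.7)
The plan is to prove the upper and lower bounds by two independent short arguments: inclusion--exclusion on the pair-neighborhoods of a fixed $3$-edge for the upper bound, and an explicit one-way bipartite construction for the lower bound.

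For the upper bound $\mathrm{co^+ex}(n,K_4)\le 2n/3$, I would fix a $K_4$-free $3$-graph $H$, take any hyperedge $abc$, and work with the restricted neighborhoods $N'(a,b) := N(a,b)\setminus\{c\}$, $N'(a,c) := N(a,c)\setminus\{b\}$, $N'(b,c) := N(b,c)\setminus\{a\}$, each of which lies in $V\setminus\{a,b,c\}$ and has size at least $\delta_2^+(H)-1$ (since $c\in N(a,b)$, $b\in N(a,c)$, $a\in N(b,c)$ because $abc$ is a $3$-edge). Any $d$ in the triple intersection of the $N'(\cdot)$ would make $\{a,b,c,d\}$ span four $3$-edges, so the triple intersection must be empty. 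Since three subsets $A_1,A_2,A_3$ of a universe of size $N$ with empty triple intersection satisfy $|A_1|+|A_2|+|A_3|\le 2N$ (each element lies in at most two of them), I would conclude $3(\delta_2^+(H)-1)\le 2(n-3)$, hence $\delta_2^+(H) \le (2n-3)/3 < 2n/3$.

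For the lower bound $\mathrm{co^+ex}(n,K_4) \ge n/2-1$, I would exhibit the complete one-way bipartite $3$-graph: partition $V = A\cup B$ with $|A|=\lceil n/2\rceil$ and $|B|=\lfloor n/2\rfloor$, and take as hyperedges precisely those triples with two vertices in $A$ and one in $B$. For $K_4$-freeness, I would note that a $4$-set containing $k\in\{0,1,2,3,4\}$ vertices of $B$ contains $k\binom{4-k}{2}\in\{0,3,6,3,0\}$ triples of the required form, which is never $4$, so no $4$-set can span all four of its triples. The positive co-degree pairs are those inside $A$, with co-degree $|B|$, and those with one vertex in each class, with co-degree $|A|-1$, so the minimum positive co-degree is $\min(|B|,|A|-1) \ge n/2-1$.

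Neither step is technically demanding in isolation; the genuine obstacle, which I would not attempt to overcome here, is that the two bounds do not meet. Closing the gap between the $1/2$ from the construction and the $2/3$ from the structural argument would require either a substantially richer $K_4$-free construction or a refined forbidden-configuration analysis that uses several $3$-edges simultaneously, and the analogous problem for ordinary co-degree density $\gamma(K_4)$ is a well-known open conjecture. This is why $K_4$ remains the sole $3$-graph in this section whose construction and upper bound are not known to be asymptotically tight.
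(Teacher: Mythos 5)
Your proof is correct and takes essentially the same route as the paper: the lower bound via the complete one-way bipartite construction, and the upper bound by noting that the three pair-neighborhoods of a fixed $3$-edge cannot have a common element in a $K_4$-free graph. Your subtraction of $\{a,b,c\}$ from the universe before counting is a mild sharpening, yielding $\delta_2^+(H)\le (2n-3)/3$ rather than the paper's $2n/3$, but it is the same idea. One small arithmetic slip in the $K_4$-freeness check: a $4$-set with $k$ vertices in $B$ spans $k\binom{4-k}{2}$ triples of type $AAB$, which for $k=0,1,2,3,4$ equals $0,3,2,0,0$ (not $0,3,6,3,0$); the conclusion that this is never $4$ still holds, so the argument goes through.
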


\begin{proof}

For the lower bound, we take an $n$-vertex complete one-way bipartite 3-graph with classes $X,Y$. When $n$ is even, put $|X| = |Y| = n/2$. Observe that two vertices in $X$ have co-degree $|Y| = n/2$, two vertices in $Y$ have co-degree zero, and a pair $x \in X, y \in Y$ have co-degree $|X| - 1 = \frac{n}{2} - 1$. When $n$ is odd, we can take $|X| = \lceil n/2 \rceil$ and $|Y| = \lfloor n/2 \rfloor$ to achieve minimum positive co-degree $ \lceil n/2 \rceil  - 1 = \lfloor  n/2 \rfloor > \frac{n}{2} - 1$.

For the upper bound, suppose $\delta_2^+(H) > \frac{2n}{3}$ and let $abc$ be a $3$-edge of $H$. Observe that $|N(a,b) \cap N(a,c) \cap N(b,c)| > 0$, so $H$ contains a $K_4$. 
\end{proof}

Since initial circulation of this manuscript on arXiv, Volec~\cite{volec}  reported that a bound of $\gamma^+(K_4) \leq  0.54296$ can be obtained via Flag Algebras.
An intriguing question raised by Theorem \ref{K4} is whether or not the co-degree and positive co-degree densities are equal when forbidding a $K_4$, i.e., does $\gamma(K_4)=\gamma^+(K_4)$?

\pagebreak

\subsection{\texorpdfstring{$F_{3,3}$}{text}}\label{f33}

\begin{center}
  \begin{longtable}{ | l ||  c  |  c  |  c  |   }
    \hline
$F_{3,3}$
&
 123, 145, 146, 156, 245, 246, 256, 345, 346, 356
 &
\vc{\begin{tikzpicture}\outercycle{7}{6}
\draw (x0) node[unlabeled_vertex]{};\draw (x1) node[unlabeled_vertex]{};\draw (x2) node[unlabeled_vertex]{};\draw (x3) node[unlabeled_vertex]{};\draw (x4) node[unlabeled_vertex]{};\draw (x5) node[unlabeled_vertex]{};
\drawhyperedge{0}{6}
\drawhypervertex{0}{0}
\drawhypervertex{1}{0}
\drawhypervertex{2}{0}
\drawhyperedge{1}{6}
\drawhypervertex{0}{1}
\drawhypervertex{3}{1}
\drawhypervertex{4}{1}
\drawhyperedge{2}{6}
\drawhypervertex{0}{2}
\drawhypervertex{3}{2}
\drawhypervertex{5}{2}
\drawhyperedge{3}{6}
\drawhypervertex{0}{3}
\drawhypervertex{4}{3}
\drawhypervertex{5}{3}
\drawhyperedge{4}{6}
\drawhypervertex{1}{4}
\drawhypervertex{3}{4}
\drawhypervertex{4}{4}
\drawhyperedge{5}{6}
\drawhypervertex{1}{5}
\drawhypervertex{3}{5}
\drawhypervertex{5}{5}
\drawhyperedge{6}{6}
\drawhypervertex{1}{6}
\drawhypervertex{4}{6}
\drawhypervertex{5}{6}
\drawhyperedge{7}{6}
\drawhypervertex{2}{7}
\drawhypervertex{3}{7}
\drawhypervertex{4}{7}
\drawhyperedge{8}{6}
\drawhypervertex{2}{8}
\drawhypervertex{3}{8}
\drawhypervertex{5}{8}
\drawhyperedge{9}{6}
\drawhypervertex{2}{9}
\drawhypervertex{4}{9}
\drawhypervertex{5}{9}
\end{tikzpicture} 
}

 &
\vc{
\begin{tikzpicture}
\draw
(0,-0.7) coordinate(1) node[vtx](a){}
(0,0) coordinate(2) node[vtx](b){}
(0,0.7) coordinate(3) node[vtx](c){}
(1,-0.7) coordinate(4) node[vtx](d){}
(1,0.0) coordinate(5) node[vtx](d){}
(1,0.7) coordinate(6) node[vtx](d){}
;
\draw[hyperedge] (1) to[bend left] (2) to[bend left] (3) to[bend right] (1);
\draw[hyperedge] (1) to[out=0,in=190] (4) to[out=190,in=250] (5) to[out=250,in=0] (1);
\draw[hyperedge] (2) to[out=0,in=140] (4) to[out=140,in=210,looseness=1.5] (5) to[out=210,in=0] (2);
\draw[hyperedge] (3) to[out=-60,in=120,looseness=1.2] (4) to[out=120,in=170,looseness=1.2] (5) to[out=170,in=-60,looseness=1.2] (3);
\draw[hyperedge] (1) to[out=60,in=190] (5) to[out=190,in=250] (6) to[out=250,in=60] (1);
\draw[hyperedge] (2) to[out=0,in=140] (5) to[out=140,in=210,looseness=1.5] (6) to[out=210,in=0] (2);
\draw[hyperedge] (3) to[out=0,in=120,looseness=1.2] (5) to[out=120,in=170,looseness=1.2] (6) to[out=170,in=0,looseness=1.2] (3);
\draw[hyperedge] (1) to[out=10,in=170,looseness=1.5] (4) to[out=170,in=190] (6) to[out=190,in=10] (1);
\draw[hyperedge] (2) to[out=0,in=140] (4) to[out=140,in=210,looseness=1.5] (6) to[out=210,in=0] (2);
\draw[hyperedge] (3) to[out=-10,in=120,looseness=1.2] (4) to[out=120,in=190,looseness=1.2] (6) to[out=190,in=-10,looseness=1.2] (3);
\end{tikzpicture}
}
   \\
   \hline
  \end{longtable}
\end{center}

We begin with a lemma.

\begin{lemma}\label{independent set bound}

Let $H = (V,E)$ be a $3$-graph with an independent set of size $cn$. Then $\delta_{2}^+(H) \leq (1-c)n$. 

\end{lemma}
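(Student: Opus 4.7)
The plan is to produce a pair of vertices with positive co-degree whose common neighborhood has size at most $(1-c)n$, via a short case analysis driven by how the edges of $H$ interact with the independent set $S$. The basic leverage is the defining property of independence: no edge of $H$ lies entirely in $S$, so every edge uses at least one vertex from $V \setminus S$, and in particular if $u,v \in S$ with $uvw \in E$ then necessarily $w \in V \setminus S$.

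First I would assume $H$ has at least one edge (otherwise $\delta_2^+(H)$ is vacuously bounded) and split into two cases. In the easy case, some edge $uvw$ of $H$ satisfies $|\{u,v,w\} \cap S| \geq 2$, say $u,v \in S$ and $w \in V \setminus S$. Then $(u,v)$ is a positive co-degree pair, and by independence of $S$ every vertex of $N(u,v)$ must lie in $V \setminus S$. This immediately gives $|N(u,v)| \leq |V \setminus S| = (1-c)n$, as required.

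In the remaining case, every edge of $H$ meets $S$ in at most one vertex. Here I would split further. If some edge $e$ of $H$ contains exactly one vertex $s \in S$, pick any other vertex $v \in e \setminus \{s\}$, which lies in $V \setminus S$; the pair $(s,v)$ has positive co-degree, and for any $w \in N(s,v)$ the edge $svw$ contains $s \in S$, so by the case hypothesis $w \notin S$, giving $N(s,v) \subseteq V \setminus S$ and the same bound $(1-c)n$. If instead no edge of $H$ meets $S$ at all, then all edges lie in $V \setminus S$, so for any positive co-degree pair $(u,v)$ we have $N(u,v) \subseteq (V \setminus S) \setminus \{u,v\}$, bounded by $(1-c)n - 2 \leq (1-c)n$.

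There is no real obstacle here; the only subtlety is not forgetting the degenerate subcase in which $S$ is not just independent but completely disjoint from every edge of $H$, and checking that the bound still holds there. The proof is essentially a careful reading of the definition of independence combined with the observation that forcing a positive co-degree pair to sit inside $S$ forces its common neighborhood to sit outside $S$.
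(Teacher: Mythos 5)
Your proof is correct and follows essentially the same strategy as the paper's: in each case you exhibit a positive co-degree pair whose common neighborhood is forced into $V \setminus S$, giving the bound $(1-c)n$. The only cosmetic difference is that you split cases by how edges intersect $S$ (two, one, or zero vertices), whereas the paper first asks whether any cross pair $x \in S$, $y \notin S$ has positive co-degree and then whether $N(x,y)$ hits $S$; these case trees line up and the core observation is identical.
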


\begin{proof}

Let $A$ be an independent set of $H$ of size $cn$ and $B = V \setminus A$. If there is no pair $x \in A, y \in B$ such that $x,y$ have positive co-degree, then every vertex in $A$ is isolated, and it is immediate that $H$ has $\delta_2^+(H) \leq (1-c)n - 2 < (1-c)n$. So we may assume that there exists $x \in A, y \in B$ such that $x,y$ have positive co-degree. If $N(x,y) \subset B$, then the co-degree of $x,y$ is at most $|B| - 1 = (1-c)n -1 < (1-c)n$, so $\delta_2^+(H) < (1-c)n$. On the other hand, if there is a $3$-edge $xyz$ with $z \in A$, then $x,z$ have positive co-degree. Since $A$ contains no $3$-edge, we must have $N(x,z) \subseteq B$, so the co-degree of $x,z$ is at most $|B| = (1-c)n$. Thus, $\delta_2^+(H)$ must be at most $(1-c)n$. 
\end{proof}

\begin{thm}
$$3\left \lfloor \frac{n}{5}  \right \rfloor  \leq \mathrm{co^+ex}(n,F_{3,3}) \leq \frac{3}{4}n.$$
\end{thm}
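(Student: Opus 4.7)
For the lower bound I would take the balanced blow-up of $K_5^{(3)}$: partition $V$ into five classes $V_1,\dots,V_5$ with $\lfloor n/5\rfloor \le |V_i|\le\lceil n/5\rceil$, and declare every triple meeting three distinct classes to be an edge. A pair of vertices inside one class has co-degree $0$, while a pair from distinct classes $V_i, V_j$ has common neighborhood $V\setminus(V_i\cup V_j)$, of size at least $3\lfloor n/5\rfloor$. To check $F_{3,3}$-freeness, one first verifies by direct inspection that every pair of vertices of $F_{3,3}$ lies in at least one of its edges (so $F_{3,3}$ has no co-degree-zero pair). Then, since any six vertices drawn from five classes must include two in a common class by pigeonhole, and such a pair has co-degree $0$ in the blow-up, no copy of $F_{3,3}$ can embed.

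For the upper bound, I would argue by contradiction: assume $H$ is $F_{3,3}$-free and $\delta_2^+(H) > 3n/4$. Fix any edge $\{v_1, v_2, v_3\}$ of $H$. Since its three pairs each have positive co-degree, the sets $A := N(v_1, v_2)$, $B := N(v_1, v_3)$, and $C := N(v_2, v_3)$ each have size exceeding $3n/4$, so inclusion--exclusion gives $|T| > n/4$ for $T := A\cap B\cap C$. The main structural observation is that $T$ must be an independent set in $H$: were $\{w_1, w_2, w_3\}$ an edge entirely inside $T$, the six vertices $\{v_1, v_2, v_3, w_1, w_2, w_3\}$ would support a copy of $F_{3,3}$ with base triple $\{w_1, w_2, w_3\}$ and outer triple $\{v_1, v_2, v_3\}$; each of the nine required ``bipartite'' edges $\{w_i, v_j, v_k\}$ (for $i\in\{1,2,3\}$ and $\{j,k\}\subset\{1,2,3\}$) is automatically present because $w_i \in T \subseteq N(v_j, v_k)$.

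Applying Lemma \ref{independent set bound} with $c = |T|/n > 1/4$ then forces $\delta_2^+(H) \le (1-|T|/n)n < 3n/4$, contradicting the hypothesis and completing the proof. There is no significant obstacle once the correct independent set is identified; the conceptual crux is recognizing that the two triples of $F_{3,3}$ can play reversed roles (since the outer triple need not itself be an edge), so producing a single edge inside $T$ already completes an $F_{3,3}$ together with $\{v_1, v_2, v_3\}$. This recasts the upper-bound problem as a bound on the independence number of $H$ and allows Lemma \ref{independent set bound} to finish the argument cleanly.
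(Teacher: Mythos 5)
Your proof is correct and follows essentially the same route as the paper: the complete balanced $5$-partite $3$-graph for the lower bound, and for the upper bound the observation that $N(v_1,v_2)\cap N(v_1,v_3)\cap N(v_2,v_3)$ must be independent (else one edge inside it together with $\{v_1,v_2,v_3\}$ yields an $F_{3,3}$), followed by an application of Lemma~\ref{independent set bound}. The paper phrases the counting step via a weight function $f(x)$ and solves $\tfrac{2}{3}+\epsilon\le 1-3\epsilon$ rather than arguing by contradiction at $\delta_2^+>3n/4$, but these are equivalent formulations of the same inclusion--exclusion bound.
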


\begin{proof}

For the lower bound, observe that the complete balanced $5$-partite $3$-graph on $n$ vertices contains no $F_{3,3}$ copy and has minimum positive co-degree at least $3 \lfloor \frac{n}{5} \rfloor$.

For the upper bound, let $\varepsilon > 0$ and suppose $H = (V,E)$ is a $3$-graph with minimum positive co-degree $\left( \frac{2}{3} + \varepsilon \right) n$. Let $abc$ be a $3$-edge of $H$. We wish to obtain a lower bound on $|N(a,b) \cap N(b,c) \cap N(a,c)|$. For $x \in V$, we define $f(x)$ to be the number of common neighborhoods in $\{N(a,b), N(b,c), N(a,c)\}$ containing $x$. So $f(x) \in \{0,1,2,3\}$ for all $x$. Notice that 
$$|N(a,b)| + |N(b,c)| + |N(a,c)| = \sum_{x \in V} f(x)$$
and
$$|N(a,b) \cap N(b,c) \cap N(a,c)| = |\{x \in V : f(x) = 3\}|.$$
From the first equation, we know that 
$$\sum_{x \in V} f(x) \geq 3\delta_{2}^+(H) = 3\left( \frac{2}{3} + \varepsilon\right)n = (2 + 3 \varepsilon)n.$$

Observe, if $|\{x \in V : f(x) = 3\}| < 3 \varepsilon n$, then strictly more than $(1-3\varepsilon)n$ vertices $x \in V$ have $f(x) \leq 2$. Therefore,
$$\sum_{x \in V} f(x) < 3(3 \varepsilon n) + 2((1-3\varepsilon)n ) = (2 + 3 \varepsilon)n.
$$
So we must have $|\{x \in V : f(x) = 3\}| \geq 3 \varepsilon n$. Thus, $|N(a,b) \cap N(b,c) \cap N(a,c)| \geq 3 \varepsilon n$. 

Now, suppose further that $H$ is $F_{3,3}$-free. For any $3$-edge $abc$ of $H$, we must have that $N(a,b) \cap N(b,c) \cap N(a,c)$ is independent in order to avoid an $F_{3,3}$. We also know that $|N(a,b) \cap N(b,c) \cap N(a,c)| \geq 3 \varepsilon n$. Thus, $H$ has an independent set of size at least $3 \varepsilon n$, so by Lemma \ref{independent set bound}, we have that $\delta_2^+(H) \leq (1-3\varepsilon)n$. Now, since $\delta_2^+(H) = \left( \frac{2}{3} + \varepsilon \right)n$, we must have 
$$\frac{2}{3} + \varepsilon \leq 1 - 3 \varepsilon,$$
which implies $\varepsilon \leq \frac{1}{12}$. So $\delta_2^+(H) \leq \left( \frac{2}{3} + \frac{1}{12} \right)n = \frac{3}{4}n$.
\end{proof}

Since posting of this manuscript, Balogh and Lidick\'y reported an improved upper bound of $\gamma^+(F_{3,3}) \leq 0.616$ via flag algebras.

\subsection{\texorpdfstring{$C_5$}{text} and \texorpdfstring{$C_5^-$}{text}}\label{c5c5-}

\begin{center}
  \begin{longtable}{ | l ||  c  |  c  |  c  |   }
    \hline
$C_5$
&
123, 234, 345, 145, 125
&
\vc{\begin{tikzpicture}\outercycle{6}{5}
\draw (x0) node[unlabeled_vertex]{};\draw (x1) node[unlabeled_vertex]{};\draw (x2) node[unlabeled_vertex]{};\draw (x3) node[unlabeled_vertex]{};\draw (x4) node[unlabeled_vertex]{};
\drawhyperedge{0}{5}
\drawhypervertex{0}{0}
\drawhypervertex{1}{0}
\drawhypervertex{2}{0}
\drawhyperedge{1}{5}
\drawhypervertex{1}{1}
\drawhypervertex{2}{1}
\drawhypervertex{3}{1}
\drawhyperedge{2}{5}
\drawhypervertex{2}{2}
\drawhypervertex{3}{2}
\drawhypervertex{4}{2}
\drawhyperedge{3}{5}
\drawhypervertex{0}{3}
\drawhypervertex{3}{3}
\drawhypervertex{4}{3}
\drawhyperedge{4}{5}
\drawhypervertex{0}{4}
\drawhypervertex{1}{4}
\drawhypervertex{4}{4}
\end{tikzpicture} 
}
&
\vc{
\begin{tikzpicture}
\path (0,1.2) -- (0,-1.1); 
\draw
\foreach \i in {0,1,...,7}{
(90+72*\i:1) coordinate(\i) node[vtx]{}
}
;
\foreach \i in {0,1,2,3,4}{
\pgfmathtruncatemacro{\j}{\i+1}
\pgfmathtruncatemacro{\k}{\i+2}
\draw[hyperedge] (\i) to[out=230+72*\i,in=270+72*\j,looseness=1] (\j) to[out=270+72*\j,in=310+72*\k,looseness=1] (\k) to[out=310+72*\k,in=230+72*\i,looseness=1] (\i);
}
\end{tikzpicture}
} 
   \\
   \hline
$C_5^-$
&
123, 234, 345, 145
&
\vc{\begin{tikzpicture}\outercycle{6}{5}
\draw (x0) node[unlabeled_vertex]{};\draw (x1) node[unlabeled_vertex]{};\draw (x2) node[unlabeled_vertex]{};\draw (x3) node[unlabeled_vertex]{};\draw (x4) node[unlabeled_vertex]{};
\drawhyperedge{0}{5}
\drawhypervertex{0}{0}
\drawhypervertex{1}{0}
\drawhypervertex{2}{0}
\drawhyperedge{1}{5}
\drawhypervertex{1}{1}
\drawhypervertex{2}{1}
\drawhypervertex{3}{1}
\drawhyperedge{2}{5}
\drawhypervertex{2}{2}
\drawhypervertex{3}{2}
\drawhypervertex{4}{2}
\drawhyperedge{3}{5}
\drawhypervertex{0}{3}
\drawhypervertex{3}{3}
\drawhypervertex{4}{3}
\end{tikzpicture} 
}
&
\vc{
\begin{tikzpicture}
\path (0,1.2) -- (0,-1.1); 
\draw
\foreach \i in {0,1,...,7}{
(90+72*\i:1) coordinate(\i) node[vtx]{}
}
;
\foreach \i in {0,1,2,3}{
\pgfmathtruncatemacro{\j}{\i+1}
\pgfmathtruncatemacro{\k}{\i+2}
\draw[hyperedge] (\i) to[out=230+72*\i,in=270+72*\j,looseness=1] (\j) to[out=270+72*\j,in=310+72*\k,looseness=1] (\k) to[out=310+72*\k,in=230+72*\i,looseness=1] (\i);
}
\end{tikzpicture}
} 
   \\
   \hline
  \end{longtable}
\end{center}

We give elementary bounds for both $C_5$ and $C_5^-$.

\begin{thm}
\[
2\left \lfloor \frac{n}{4} \right \rfloor \leq \mathrm{co^+ex}(n,C_5)\leq  \frac{2}{3}n.
\]
\end{thm}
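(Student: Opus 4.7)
The plan is to prove the two bounds separately. For the lower bound, the natural candidate is the complete balanced $4$-partite $3$-graph $H$ with parts $V_1,V_2,V_3,V_4$ of sizes $\lfloor n/4\rfloor$ or $\lceil n/4\rceil$. Any pair $\{u,v\}$ lying in a common $3$-edge of $H$ must occupy two distinct parts $V_i,V_j$, whence $N(u,v)=V_k\cup V_\ell$ for $\{k,\ell\}=\{1,2,3,4\}\setminus\{i,j\}$, giving $|N(u,v)|\geq 2\lfloor n/4\rfloor$. To verify that $H$ contains no copy of $C_5=\{123,234,345,145,125\}$, I would observe that all $\binom{5}{2}=10$ pairs of vertices of $C_5$ appear together in some edge of $C_5$. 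Consequently, in any putative embedding of $C_5$ into $H$, pigeonhole would force two vertices of $C_5$ into the same part; but then the edge of $C_5$ containing that pair would require two of its vertices in one part, contradicting $4$-partiteness.

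For the upper bound, I would assume $\delta_2^+(H)>\tfrac{2n}{3}$ and construct a $C_5$ directly. Fix any $3$-edge $v_1v_2v_3$; since $|N(v_2,v_3)|>\tfrac{2n}{3}\geq 2$, I can choose $v_4\in N(v_2,v_3)\setminus\{v_1\}$, producing the second edge $v_2v_3v_4$. Each of $N(v_1,v_2)$, $N(v_1,v_4)$, $N(v_3,v_4)$ has size strictly greater than $\tfrac{2n}{3}$, so inclusion--exclusion inside the $n$-element vertex set yields
\[
|N(v_1,v_2)\cap N(v_1,v_4)\cap N(v_3,v_4)|>3\cdot\tfrac{2n}{3}-2n=0.
\]
Since this count is an integer, the intersection is nonempty; any $v_5$ in it satisfies $v_5\notin\{v_1,v_2,v_3,v_4\}$ because each common neighborhood excludes its two defining indices and $\{v_1,v_2,v_3,v_4\}\subseteq\{v_1,v_2\}\cup\{v_1,v_4\}\cup\{v_3,v_4\}$. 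The five triples $v_1v_2v_3$, $v_2v_3v_4$, $v_3v_4v_5$, $v_1v_4v_5$, $v_1v_2v_5$ are then all edges of $H$ and form the desired copy of $C_5$.

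The argument is short because $C_5$ is the tight $3$-uniform $5$-cycle and every pair of its vertices lies in at least one edge; this single combinatorial feature powers both bounds (the pigeonhole step in the lower bound and the fact that only three common-neighborhood conditions need to be intersected in the upper bound). I do not anticipate any real obstacle: the only points requiring care are the two distinctness checks ($v_4\neq v_1$ and $v_5\notin\{v_1,v_2,v_3,v_4\}$), both handled automatically by the definition of $N(\cdot,\cdot)$ together with the strict inequality $\delta_2^+(H)>\tfrac{2n}{3}$.
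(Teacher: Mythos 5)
Your lower-bound argument is correct and takes essentially the paper's route (the paper simply asserts that $C_5$ is not $4$-partite; you spell out why via the observation that the $2$-shadow of $C_5$ is $K_5$). The upper bound, however, contains a genuine gap in the choice of $v_4$. You claim that each of $N(v_1,v_2)$, $N(v_1,v_4)$, $N(v_3,v_4)$ has size greater than $\tfrac{2n}{3}$, but the minimum positive co-degree condition only governs pairs that already lie in a common $3$-edge. The pairs $\{v_1,v_2\}$ and $\{v_3,v_4\}$ do (they sit in $v_1v_2v_3$ and $v_2v_3v_4$ respectively), but nothing in your selection of an \emph{arbitrary} $v_4\in N(v_2,v_3)\setminus\{v_1\}$ forces $\{v_1,v_4\}$ to lie in any $3$-edge, so you may well have $N(v_1,v_4)=\emptyset$, which collapses the inclusion--exclusion count to $0$.

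This is not merely an expository slip: the step as written can actually fail. For instance, take $V=X\cup Y$ with $|X|=2$ and $|Y|=n-2$, and let the $3$-edges be all triples with at most one vertex in $X$. Then every pair with positive co-degree has co-degree at least $n-3>\tfrac{2n}{3}$ for $n>9$; yet if $v_1\in X$, $v_2,v_3\in Y$, and $v_4$ is the other vertex of $X$, we have $v_4\in N(v_2,v_3)\setminus\{v_1\}$ but $N(v_1,v_4)=\emptyset$. (Of course this $H$ does contain a $C_5$; it is only your particular $v_4$ that derails the count.) The fix is small and is exactly what the paper builds in: since $|N(v_2,v_3)|+|N(v_1,v_2)|>\tfrac{4n}{3}>n$, the intersection $N(v_2,v_3)\cap N(v_1,v_2)$ is nonempty; choosing $v_4$ there gives the extra edge $v_1v_2v_4$, so $\{v_1,v_4\}$ has positive co-degree, $v_4\neq v_1,v_2,v_3$, and the rest of your argument goes through verbatim. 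With this one change your proof matches the paper's.
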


\begin{proof}
For the lower bound, it is easy to see that $C_5$ is not $4$-partite so it is not contained in the balanced complete $4$-partite $3$-graph. 
For the upper bound, let $H$ be an $n$-vertex $3$-graph with $\delta_2^+(H) > \frac{2}{3}n$. It is easy to find four vertices $v_1,v_2,v_3,v_4$ such that $v_1v_2v_3, v_2v_3v_4 \in E(H)$ and $v_1, v_4$ have positive co-degree. Then observe that
$N(v_1,v_2)\cap N(v_3,v_4)\cap N(v_1,v_4)\ne \emptyset$.  Therefore, there exists a $v_5$ in all three neighborhoods. Thus, $v_4v_5v_1$, $v_5v_1v_2$ and $v_3v_4v_5$ are all $3$-edges in ${H}$. Together with $v_1v_2v_3$ and $v_2v_3v_4$ they form a $C_5$.
\end{proof}

\begin{thm}
\[
\left \lfloor \frac{n}{3} \right \rfloor \leq\mathrm{co^+ex}(n,C_5^-)\leq \frac{1}{2}n.
\]
\end{thm}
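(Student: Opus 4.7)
The plan is to handle the two bounds separately, with the upper bound mirroring the $C_5$ argument immediately above but requiring one fewer co-degree intersection.

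For the lower bound $\mathrm{co^+ex}(n,C_5^-) \geq \lfloor n/3 \rfloor$, I would exhibit the complete balanced $3$-partite $3$-graph on $n$ vertices; it has minimum positive co-degree $\lfloor n/3 \rfloor$, since for any pair $x,y$ with positive co-degree, $N(x,y)$ is exactly the third partition class. To see the construction is $C_5^-$-free, I would verify that $C_5^-$ itself is not $3$-partite: in any proper $3$-coloring $\chi$ of $V(C_5^-)$, the edges $123$, $234$, and $345$ force the sequence of colors to satisfy $\chi(4) = \chi(1)$ and $\chi(5) = \chi(2)$, which contradicts the fourth edge $145$. Hence $C_5^-$ cannot embed into any $3$-partite $3$-graph.

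For the upper bound $\mathrm{co^+ex}(n,C_5^-) \leq n/2$, I would let $H$ be an $n$-vertex $3$-graph with $\delta_2^+(H) > n/2$ and construct a $C_5^-$ explicitly. First, start from any $3$-edge $v_2 v_3 v_4$. Because $|N(v_3,v_4)| > n/2 \geq 2$ and $v_2$ is only one element of $N(v_3,v_4)$, I may choose some $v_5 \in N(v_3,v_4) \setminus \{v_2\}$, producing the $3$-edge $v_3 v_4 v_5$. Now the pair $\{v_4,v_5\}$ has positive co-degree (it lies in $v_3 v_4 v_5$), so $|N(v_4,v_5)| > n/2$, and likewise $|N(v_2,v_3)| > n/2$. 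Hence
\[
|N(v_4,v_5) \cap N(v_2,v_3)| > n/2 + n/2 - n = 0,
\]
and I may pick any $v_1$ in this intersection; this yields the further $3$-edges $v_1 v_4 v_5$ and $v_1 v_2 v_3$. The four $3$-edges $v_1 v_2 v_3$, $v_2 v_3 v_4$, $v_3 v_4 v_5$, $v_1 v_4 v_5$ then form the desired $C_5^-$.

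The only place the argument might slip is checking that the five vertices are distinct, but this falls out of the definitions: $v_1 \in N(v_4,v_5)$ forces $v_1 \notin \{v_4,v_5\}$, $v_1 \in N(v_2,v_3)$ forces $v_1 \notin \{v_2,v_3\}$, and $v_5$ was explicitly chosen outside $\{v_2,v_3,v_4\}$. So I do not anticipate a real obstacle; the argument is a direct analog of the $C_5$ proof, but because $C_5^-$ has one fewer edge to close up, a nontrivial intersection of only two common neighborhoods suffices, which is precisely why the threshold improves from $2n/3$ to $n/2$.
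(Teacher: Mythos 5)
Your proof is correct and follows essentially the same approach as the paper's: build a tight path of two overlapping $3$-edges and then intersect the two end-pair common neighborhoods, which is nonempty under the hypothesis $\delta_2^+(H) > n/2$, to produce the fifth vertex closing up a $C_5^-$. The only differences are cosmetic (vertex relabeling) plus the welcome extra care you take in verifying that the five vertices are distinct and that $C_5^-$ is not $3$-partite, both of which the paper leaves implicit.
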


\begin{proof}
For the lower bound, consider a triangle blowup, i.e., a balanced complete $3$-partite $3$-graph.  For the upper bound, let $H$ be an $n$-vertex $3$-graph with $\delta_2^+(H) > \frac{1}{2}n$. It is easy to find four vertices $v_1,v_2,v_3,v_4$ such that $v_1v_2v_3, v_2v_3v_4  \in  E(H)$. Observe that  $N(v_1,v_2)\cap N(v_3,v_4)\ne \emptyset$.  Therefore, there exists a vertex $v_5$ in both neighborhoods. Thus, $v_5v_1v_2$ and $v_3v_4v_5$ are also both $3$-edges in ${H}$. Together with $v_1v_2v_3$ and $v_2v_3v_4$ they form a $C_5^-$.
\end{proof}

Since our manuscript was posted to arXiv, Wu~\cite{Wu} has determined the values $\mathrm{co^+ex}(n,C_5)$ and $\mathrm{co^+ex}(n,C_5^-)$ exactly.

\subsection{\texorpdfstring{$J_k$}{text}}\label{jk}
$J_k$ is the 3-graph on $k+1$ vertices with $\binom{k}{2}$ 3-edges such that there is a distinguished vertex contained in every 3-edge, and all other vertex pairs are contained in a 3-edge.

\begin{center}
  \begin{longtable}{ | l ||  c  |  c  |  c  |   }
    \hline
   $J_4$
   &
   123, 124, 125, 134, 135, 145
   &
\vc{\begin{tikzpicture}\outercycle{6}{5}
\draw (x0) node[unlabeled_vertex]{};\draw (x1) node[unlabeled_vertex]{};\draw (x2) node[unlabeled_vertex]{};\draw (x3) node[unlabeled_vertex]{};\draw (x4) node[unlabeled_vertex]{};
\drawhyperedge{0}{5}
\drawhypervertex{0}{0}
\drawhypervertex{1}{0}
\drawhypervertex{2}{0}
\drawhyperedge{1}{5}
\drawhypervertex{0}{1}
\drawhypervertex{1}{1}
\drawhypervertex{3}{1}
\drawhyperedge{2}{5}
\drawhypervertex{0}{2}
\drawhypervertex{1}{2}
\drawhypervertex{4}{2}
\drawhyperedge{3}{5}
\drawhypervertex{0}{3}
\drawhypervertex{2}{3}
\drawhypervertex{3}{3}
\drawhyperedge{4}{5}
\drawhypervertex{0}{4}
\drawhypervertex{2}{4}
\drawhypervertex{4}{4}
\drawhyperedge{5}{5}
\drawhypervertex{0}{5}
\drawhypervertex{3}{5}
\drawhypervertex{4}{5}
\end{tikzpicture} 
}   
   &
\vc{
\begin{tikzpicture}[scale=0.8]
\draw
(0,0) coordinate(1) node[vtx](b){}
(45:1) coordinate(2) node[vtx](c){}
(135:1) coordinate(3) node[vtx](d){}
(225:1) coordinate(4) node[vtx](d){}
(315:1) coordinate(5) node[vtx](d){}
;
\draw[hyperedge] (1) to[out=90,in=190,looseness=1.5] (2) to[out=190,in=-10] (3) to[out=-10,in=90,looseness=1.5] (1);
\draw[hyperedge] (1) to[out=180,in=280,looseness=1.5] (3) to[out=280,in=80] (4) to[out=80,in=180,looseness=1.5] (1);
\draw[hyperedge] (1) to[out=270,in=10] (4) to[out=10,in=170,looseness=1.5] (5) to[out=170,in=270] (1);
\draw[hyperedge] (1) to[out=0,in=100,looseness=1.5] (5) to[out=100,in=260,looseness=1.5] (2) to[out=260,in=0,looseness=1.5] (1);
\draw[hyperedge] (1) to[out=135,in=225,looseness=1.2] (2) to[out=205,in=65,looseness=1.2] (4) to[out=45,in=135,looseness=1.2] (1);
\draw[hyperedge] (1) to[out=45,in=315,looseness=1.2] (3) to[out=335,in=115,looseness=1.2] (5) to[out=135,in=45,looseness=1.2] (1);
\end{tikzpicture}
}   
   \\
   \hline
  \end{longtable}
\end{center}

\begin{thm}
\begin{equation*}
(k-2)\left \lfloor \frac{n}{k} \right \rfloor \leq\mathrm{co^+ex}(n,J_k)\leq \frac{k-2}{k-1}n.
\end{equation*}
\end{thm}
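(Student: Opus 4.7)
For the lower bound, I would show that the complete balanced $k$-partite $3$-graph $H$ (with classes $V_1, \ldots, V_k$ of sizes $\lfloor n/k \rfloor$ or $\lceil n/k \rceil$) is $J_k$-free and satisfies $\delta_2^+(H) \geq (k-2)\lfloor n/k \rfloor$. The co-degree bound is immediate: two vertices have positive co-degree if and only if they lie in distinct classes $V_i, V_j$, in which case $N(u,v) = V \setminus (V_i \cup V_j)$, whose size is at least the sum of the $k-2$ smallest class sizes, which is at least $(k-2)\lfloor n/k \rfloor$. For $J_k$-freeness, suppose a copy of $J_k$ is present with distinguished vertex $v \in V_i$; then the $k$ non-distinguished vertices each form an edge with $v$, so each lies outside $V_i$, and any two of them form an edge together with $v$, so they lie in pairwise distinct classes. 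This places $k$ vertices into the $k-1$ classes $\{V_j : j \neq i\}$, a pigeonhole contradiction.

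For the upper bound, I would show that $\delta_2^+(H) > \frac{k-2}{k-1}n$ forces $J_k \subseteq H$ via a greedy embedding. Fix any edge $vv_1v_2$ of $H$; I would designate $v$ as the distinguished vertex and extend by finding $v_3, \ldots, v_k$ so that $vv_iv_{i'} \in E(H)$ for all $1 \leq i < i' \leq k$. Given $v_1, \ldots, v_j$ satisfying this property (for some $2 \leq j \leq k-1$), each pair $\{v, v_i\}$ has positive co-degree, so $|N(v, v_i)| > \frac{k-2}{k-1}n$. A standard complementary union bound then gives
\[
\left|\bigcap_{i=1}^j N(v, v_i)\right| \geq \sum_{i=1}^j |N(v, v_i)| - (j-1)n > j \cdot \frac{k-2}{k-1}n - (j-1)n = \frac{k-1-j}{k-1}n.
\]
Since neither $v$ nor any $v_i$ belongs to $N(v, v_i)$, any element of this intersection is a new vertex distinct from all previously chosen ones, and can serve as $v_{j+1}$; iterating up to $j = k-1$ produces $v_k$ and completes the embedded $J_k$.

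The tight case is the final extension step $j = k-1$, where the displayed lower bound equals $0$; the argument there relies on the strict inequality $\delta_2^+(H) > \frac{k-2}{k-1}n$ (which makes the chain of inequalities strict) together with the integrality of the intersection size to conclude that the intersection has at least one element. This is the main (minor) obstacle; the rest of the argument is a routine greedy construction analogous to the upper-bound proofs for $C_5$, $C_5^-$, and the Fano plane earlier in the section.
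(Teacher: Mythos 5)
Your proposal is correct, and the underlying argument is essentially the paper's. The paper proves the upper bound by induction on $k$: assuming the statement for $k-1$, a hypergraph $H$ with $\delta_2^+(H) > \frac{k-2}{k-1}n > \frac{k-3}{k-2}n$ contains a $J_{k-1}$ with distinguished vertex $x$, and then a pigeonhole (i.e.\ the same inclusion–exclusion bound you use) shows that $\bigcap_{z} N(x,z)$ is nonempty, supplying the final vertex. Your version unrolls that induction into a greedy construction within a single $H$, which avoids having to note that the threshold $\frac{k-2}{k-1}$ exceeds $\frac{k-3}{k-2}$; your lower bound is also the paper's (complete $k$-partite), just spelled out in more detail. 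The two proofs are the same in substance.
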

\begin{proof}
For $k\leq 2$, the bounds trivially hold.  For the lower bound when $k>2$, consider a complete $k$-partite graph.  We prove the upper bound by induction on $k$.  Note that the base case, $k=2$ trivially holds.  Assume then that an $n$-vertex hypergraph $H$ has minimum positive co-degree greater than $\frac{k-2}{k-1}n$.  By the induction hypothesis, $H$ contains a $J_{k-1}$.  Let $x$ denote the distinguished vertex of the $J_{k-1}$.  We need to find a vertex $y$ such that $xyz$ form an edge of the graph for each of the $k-1$ vertices $z$ distinct from $x$ in the $J_{k-1}$.  As the co-degree of each pair $z$ and $x$ is greater than $\frac{k-2}{k-1}n$ such a vertex $y$ must exist by the pigeonhole principle.
\end{proof}

For the $J_4$, the lower bound can be improved to $4 \lfloor \frac{n}{7} \rfloor$ with the following construction.

\begin{const} Let $\mathbb{F}$ be the Fano plane (see Section~\ref{fano}).
We take an $n$-vertex balanced blow-up of  the complement of $\mathbb{F}$, that is, a $3$-graph ${H}_7$ with vertex set $V$ of size $n$ partitioned as $V = V_1 \cup \dots \cup V_7$, and $3$-edge set 
$$ E= \{ v_{i}v_{j}v_{k}: \text{ for all } v_i \in V_i, v_{j} \in V_{j}, v_k \in V_k \text{ such that } i<j<k \text{ and } ijk \not \in  \mathbb{F}.\}$$
\end{const}

It is easy to check that $J_4$ is not a subhypergraph of the complement of $\mathbb{F}$. 
Moreover, every pair of vertices in $J_4$ has non-zero co-degree. Therefore, the blow-up $H_7$ is $J_4$-free.
A pair of vertices in the same class $V_i$ have co-degree $0$. Now let $x,y$ be vertices in distinct classes $V_i,V_j$. There is a unique edge $ijk$ in the Fano plane corresponding to classes $V_i$ and $V_j$. Therefore, there is an edge $xyz$ for each vertex $z$ in a class other than $V_i,V_j,V_k$. Thus, $x,y$ has co-degree at least  $4 \lfloor \frac{n}{7} \rfloor$.

Therefore, $\gamma^+(J_4) \geq 4/7 > 0.57$.
Since initial circulation of these results, Balogh and Lidick\'y~\cite{BL} 
reported an improved upper bound of $\gamma^+(J_4) \leq 0.58$ via flag algebras.

\section{General results}\label{sec-secondary}

In this section, we collect results of a more general flavor. Our first goal is to establish the existence of the limit 
$$\gamma^+(F) = \lim_{n \rightarrow \infty} \frac{\mathrm{co^+ex}(n,F)}{n}.$$

As previously mentioned, the existence of this limit can be proved by adapting the argument of Mubayi and Zhao~\cite{Mubayicode} which establishes the existence of the limit
$$\gamma(F) = \lim_{n \rightarrow \infty} \frac{\mathrm{coex}(n,F)}{n}.$$
We refer the reader to \cite{P} for a rigorous treatment. We shall give a different proof of the existence of $\lim_{n \rightarrow \infty} \frac{\mathrm{co^+ex}(n,F)}{n}$, which highlights some of the nice aspects of working with positive co-degree; the key argument which allows our approach will also yield simple proofs of general supersaturation and invariance of co-degree density under the blow-up operation. In fact, we shall prove supersaturation and blow-up invariance before proving the existence of $\gamma^+(F)$.

The key step in these proofs will be an application of the hypergraph removal lemma, which we state below (see, for example, \cite{ConlonFox}).

\begin{lemma}\label{hypergraph removal ch4} 
Let $F$ be an $r$-graph and fix $\varepsilon > 0$. There exists $\delta > 0$ such that if $H$ is an $n$-vertex $r$-graph containing at most $\delta n^{|V(F)|}$ copies of $F$, then there exists $E' \subset E(H)$ such that $|E'| \leq \varepsilon n^r$ and $H - E'$ is $F$-free.

\end{lemma}

Roughly speaking, given an $r$-graph $H$ with few copies of $F$, we can remove these copies via few $r$-edge deletions to obtain an $F$-free subhypergraph $H_1$ of $H$. However, it is unclear that $\delta_{r-1}^+(H_1)$ should be close to $\delta_{r-1}^+(H)$. In fact, it may be the case that $\delta_{r-1}^+(H_1)$ is much smaller than $\delta_{r-1}^+(H)$. The following lemma, from which a number of general theorems will quickly follow, shows that we can ``clean up'' $H_1$ to obtain a subhypergraph $H_2$ with $\delta_{r-1}^+(H_2)$ near to $\delta_{r-1}^+(H)$. Note that such a ``clean-up'' process would not work in the ordinary co-degree setting, since the procedure relies upon deleting $r$-edges so that sets whose co-degree is small in $H_1$ will have co-degree $0$ in $H_2$.

Throughout this section we will use $d_{r-1}(S)$ to denote the co-degree of an $(r-1)$-set $S$, i.e., the number of $r$-edges containing $S$.

\begin{lemma} \label{cleanup lemma}

Let $H$ be an $n$-vertex $r$-graph and fix $0< \varepsilon <1$ small enough that 
\[(r+1)! \sqrt[2^{r-1}]{\varepsilon}n^r < |E(H)|.\] Let $H_1$ be a subhypergraph of $H$ obtained by the deletion of at most $\varepsilon n^r$ $r$-edges. Then $H_1$ has a subhypergraph $H_2$ 
with $\delta_{r-1}^+(H_2) \geq \delta_{r-1}^+(H) - 2^r r! \sqrt[2^{r-1}]{\varepsilon}n$.

\end{lemma}

\begin{proof}

Let $E'$ be the set of $r$-edges which we delete from $H$ to obtain $H_1$. For $S \in \binom{V(H)}{r-1}$, let $f_{r-1}(S)$ be the amount by which $d_{r-1}(S)$ drops after the deletion of $E'$. Each deleted $r$-edge drops $d_{r-1}(S)$ by $1$ for $r$ different $(r-1)$-sets $S$, so 
$$\sum_{S} f_{r-1}(S) \leq r \varepsilon n^r.$$

Thus, at most $r \sqrt{\varepsilon}n^{r-1}$ sets $S$ have $f(S) \geq \sqrt{\varepsilon} n$. We shall ultimately delete all edges of $H_1$ which contain an $(r-1)$-set $S$ with $f_{r-1}(S) \geq \sqrt{\varepsilon} n$. However, this deletion alone may be insufficient to guarantee the desired minimum positive co-degree. We make the following definition.  We say that an $(r-1)$-set $S_{r-1}$ is $(r-1)$-\textit{bad} if $S_{r-1}$ is contained in at least $\sqrt{\varepsilon} n$ edges of $E'$, i.e., if $f_{r-1}(S) \geq \sqrt{\varepsilon} n$. For $1 \leq i \leq r-2$, we say that an $i$-set $S_i$ is $i$-\textit{bad} if $S_i$ is contained in at least $\sqrt[2^{r-i}]{\varepsilon}n$ sets which are $(i+1)$-bad. We shall in fact delete all $r$-edges of $H_1$ which contain any $i$-bad set, for any $i \in \{1, \dots, r-1\}$.

We bound the number of $i$-bad sets in $H_1$. First, we consider $(r-2)$-bad sets. For a set $S_{r-2} \in \binom{V(H_1)}{r-2}$, let $f_{r-2}(S_{r-2})$ be the number of $(r-1)$-bad sets containing $S_{r-2}$. Since $H_1$ contains at most $r \sqrt{\varepsilon} n^{r-1}$ $(r-1)$-bad sets, and since each $(r-1)$-bad set contains $r-1$ sets of size $r-2$, we have
\[ \sum_{S_{r-2}} f_{r-2}(S_{r-2}) \leq (r-1) \cdot r \sqrt{\varepsilon} n^{r-1}.\]
Thus, at most $r(r-1) \sqrt[4]{\varepsilon} n^{r-2}$ sets $S_{r-2}$ have $f_{r-2}(S_{r-2}) \geq \sqrt[4]{\varepsilon}n$, i.e., at most $\frac{r!}{(r-2)!}\sqrt[4]{\varepsilon}n^{r-2}$ sets of size $r-2$ are $(r-2)$-bad. Proceeding inductively, at most $\frac{r!}{i!} \sqrt[2^{r-i}]{\varepsilon} n^i$ $i$-sets are $i$-bad.

Thus, for any $i$, there are few $i$-bad sets in $V(H_1)$. Now, we delete all $r$-edges of $H_1$ which contain any $i$-bad set. Let $H_2$ be the resulting subhypergraph of $H_1$. We claim that $H_2$ has the desired minimum positive co-degree. 
Indeed, suppose $T$ is an $(r-1)$-set of vertices with $d_{r-1}(T) > 0$ in $H_2$. Thus, no subset of $T$ is bad. We now estimate the number of $r$-edges containing $T$ which have been deleted.

Suppose $T \subset e \in E(H) \setminus E(H_2)$. Thus, either $e \in E'$ or there is some bad $i$-set $S_i \subset e$. Since $T$ itself is not $(r-1)$-bad, $T$ is contained in at most $\sqrt{\varepsilon} n$ $r$-edges of $E'$. If $e$ is not in $E'$, but contains an $i$-bad set $S_i$, then $|S_i \cap T| = i-1$. Let $T_{i-1}= S_i \cap T$ be the set of $i-1$ vertices of $T$ which are distinguished by $e$. If $T_{i-1} = \emptyset$, then $S_i$ is a $1$-bad set. There are at most $r!\sqrt[2^{r-1}]{\varepsilon}n$ $1$-bad sets. Now, suppose $T_{i-1} \neq \emptyset$. Since $T_{i-1}$ is not $(i-1)$-bad, $T_{i-1}$ is contained in fewer than $\sqrt[2^{r-i+1}]{\varepsilon} n$ $i$-bad sets. Thus, each subset of $T$ of size $i-1$ corresponds to fewer than $\sqrt[2^{r-i+1}]{\varepsilon} n$ $r$-edges containing $T$ which are deleted from $H_1$. Therefore, the described deletions drop the co-degree of $T$ by at most
\[ \sqrt{\varepsilon}n + r!\sqrt[2^{r-1}]{\varepsilon}n + \sum_{i=1}^{r-1} \binom{r-1}{i} \sqrt[2^{r-i+1}]{\varepsilon}n \leq 2^r r! \sqrt[2^{r-1}]{\varepsilon}n.\]

Thus, any set $T$ which maintains positive co-degree in $H_2$ satisfies the desired minimum positive co-degree condition. It remains to show that $H_2$ is not empty. We estimate the total number of $r$-edges deleted from $H$ to obtain $H_2$. At most $\varepsilon n$ $r$-edges are deleted to obtain $H_1$. From $H_1$, we delete all $r$-edges which contain any $i$-bad set, for any $1 \leq i \leq r-1$. A fixed $i$-bad set is in at most $n^{r-i}$ $r$-edges, so, using previous bounds on the number of $i$-bad sets, we delete at most 

\[ \sum_{i=1}^{r-1} n^{r-i} \frac{r!}{i!} \sqrt[2^{r-i}]{\varepsilon} n^i \]
$r$-edges from $H_1$ to obtain $H_2$. Thus, the total number of $r$-edge deletions required to obtain $H_2$ is at most 
\[\varepsilon n^r + \sum_{i=1}^{r-1} n^{r-i} \frac{r!}{i!} \sqrt[2^{r-i}]{\varepsilon} n^i < (r+1)! \sqrt[2^{r-1}]{\varepsilon}n^r\]
By hypothesis, $|E(H)| > (r+1)! \sqrt[2^{r-1}]{\varepsilon}n^r$, so $H_2$ is not empty.\qedhere

\end{proof}

Note that in general, Lemma~\ref{cleanup lemma} will only be useful for $r$-graphs with order $n^r$ $r$-edges, since we must be able to pick $\varepsilon$ with $(r+1)! \sqrt[2^{r-1}]{\varepsilon}n^r < |E(H)|$. However, recall that Lemma~\ref{edge approx} tells us that if $\delta_{r-1}^+(H)$ is linear, then $|E(H)| = \Theta(n^r)$. Thus, in situations where we cannot apply Lemma~\ref{cleanup lemma}, we will have $\delta_{r-1}^{+}(H) = o(n)$, which will be enough for us to work with.

Our first application of Lemma~\ref{cleanup lemma} is a general supersaturation result.

\begin{thm}\label{supersat}

Let $F$ be an $r$ graph and fix $\varepsilon > 0$. There exists $\delta > 0$ such that, if $H$ is an $n$-vertex $r$-graph with 
$$\delta_{r-1}^+(H) >\mathrm{co^+ex}(F) + \varepsilon n,$$
then $H$ contains at least $\delta n^{|V(F)|}$ copies of $F$. 
\end{thm}

\begin{proof}

Since $\delta_{r-1}^+(H) \geq \varepsilon n$, we know $|E(H)| = \Theta(n^r)$ by Lemma~\ref{edge approx}. Fix $\alpha > 0$ such that $(r+1)! \sqrt[2^{r-1}]{\alpha}n^r < |E(H)|$ and $2^r r! \sqrt[2^{r-1}]{\alpha} < \varepsilon$, and apply Lemma~\ref{hypergraph removal ch4} with $\delta = \delta(\alpha)$. Thus, if $H$ contains at most $\delta n^{|V(F)|}$ copies of $F$, then $H$ can be made $F$-free with at most $\alpha n^r$ $r$-edge deletions. Let $H_1$ be the resulting subhypergraph of $H$. By Lemma~\ref{cleanup lemma} and the choice of $\alpha$, $H_1$ contains a subhypergraph $H_2$ with $\delta_{r-1}^+(H_2) >  \delta_{r-1}^+(H) - \varepsilon n > \mathrm{co^+ex}(n,F)$, a contradiction, as $H_2$ is $F$-free. We conclude that $H$ contains more than $\delta n^{|V(F)|}$ copies of $F$.
\end{proof}

As a standard consequence of supersaturation (see, e.g.~\cite{Keevashsurvey}), we have blow-up invariance for  $\mathrm{co^+ex}(n,F)$. First we set some additional notation. Given an $r$-graph $H$ and an integer $t$, the \textit{t-blow-up} $H[t]$ is the $r$-graph obtained by replacing each vertex $v_i \in V(H)$ with a strongly independent class $V_i$ of $t$ vertices. A set of $r$ vertices in $H[t]$ spans an $r$-edge if and only if the vertices' classes correspond to the $r$ vertices of an $r$-edge in $H$. We will also sometimes wish to consider blow-ups whose classes are not exactly equal in size. We shall denote by $H[n/v(H)]$ a blow-up of $H$ with $n$ vertices and vertex classes of size $\lfloor n/v(H) \rfloor$ or $\lceil n/v(H) \rceil$, called a \textit{balanced} blow-up of $H$. Note that there is not always a unique such blow-up. However, in the one instance where we work with balanced blow-ups which are not of the form $H[t]$, our argument will apply to any balanced blow-up of appropriate size; thus, we are happy to leave the notation somewhat ambiguous. Note that when $v(H)$ divides $n$, there is a unique balanced blow-up of $H$ with vertex classes of size $n/v(H)$, i.e., if $n/v(H) = t$, then $H[n/v(H)] = H[t]$. 

\begin{cor} \label{blowup invariance}

Let $F$ be an $r$-graph and $t$ a positive integer. Then
$$\mathrm{co^+ex}(n,F) \leq \mathrm{co^+ex}(n,F[t]) \leq \mathrm{co^+ex}(n,F) + o(n).$$
\end{cor}

Now, we are nearly ready to prove that $\gamma^+(F) = \lim_{n \rightarrow \infty} \frac{\mathrm{co^+ex}(n,F)}{n}$ exists. The broad strategy will be to show that $c_n := \frac{\mathrm{co^+ex}(n,F)}{n}$ becomes arbitrarily close to the least upper bound $\ell$ on $\{c_n\}$ by constructing (for $n$ large enough) $n$-vertex, $F$-free $r$-graphs with minimum positive co-degree nearly $\ell n$. We obtain these $r$-graphs by finding one ``good'' construction and taking balanced blow-ups. However, for general $H,F,$ and $t$, it is possible that $F \not\subset H$ but $F \subset H[t]$, so we shall have to be somewhat careful.

Given two $r$-graphs, $H$ and $F$, we shall say that $H$ \textit{blowup-contains} $F$ if there is some $t$ for which $F \subseteq H[t]$. For a fixed $r$-graph $F$ on $f$ vertices, we define $\mathcal{B}(F)$ to be the family of $r$-graphs on at most $f$ vertices which blowup-contain $F$. Note that $\mathcal{B}(F)$ always contains $F$; depending upon the structure of $F$, $\mathcal{B}(F)$ may contain other $r$-graphs as well. However, since its members have bounded size, $\mathcal{B}(F)$ will always be a finite set. The standard proof of Corollary~\ref{blowup invariance} can be easily adapted to the following result.

\begin{cor} \label{blowup containment corollary}
For any $r$-graph $F$,
$$\mathrm{co^+ex}(n, \mathcal{B}(F)) \leq \mathrm{co^+ex}(n,F) \leq \mathrm{co^+ex}(n,\mathcal{B}(F)) + o(n).$$
\end{cor}

\begin{proof}
Fix $\varepsilon> 0$ and let $F$ be an $r$-graph on $f$ vertices. We wish to show that, for $n$ large enough, any $n$-vertex $r$-graph $H$ with $\delta_{r-1}^+(H) \geq  \mathrm{co^+ex}(n,\mathcal{B}(F)) + \varepsilon n$ contains a copy of $F$. Put $k:=|\mathcal{B}(F)|$, and label the members of $\mathcal{B}(F)$ as $F_1, \dots, F_k$. Observe, for any $F_i \in \mathcal{B}(F)$, we have $F \subset F_i[f]$
, so it suffices to show that $H$ contains a large enough blow-up of some $F_i$. This fact follows by essentially the same proof as Corollary~\ref{blowup invariance}; fixing $\alpha$ as in the proof of Theorem~\ref{supersat}, we apply the Hypergraph Removal Lemma $k$ times to find $\delta_1, \dots, \delta_k$ such that if $H$ contains at most $\delta_i n^{|V(F_i)|}$ copies of $F_i$, then all copies of $F_i$ can be destroyed by the deletion of at most $\frac{\alpha}{k}n^{r}$ $r$-edges. If $H$ in fact contains at most $\delta_i n^{|V(F_i)|}$ copies of $F_i$ for all $i$, then $H$ can be made $\mathcal{B}(F)$-free by deleting at most $\alpha n^r$ $r$-edges. Thus we can apply Lemma \ref{cleanup lemma} obtain a $\mathcal{B}(F)$-free subhypergraph $H'$ of $H$ with $\delta_{r-1}^+(H') > \delta_{r-1}^+(H) - \varepsilon n \geq \mathrm{co^+ex}(n,\mathcal{B}(H))$ for a contradiction.
Thus, for some $i$, $H$ contains at least $\delta_i n^{|V(F_i)|}$ copies of $F_i$. By a standard hypergraph Ramsey argument, this implies that $H$ contains a copy of $F_i[f]$ for $n$ large enough.
\end{proof}

Now, observe that if $H$ is $\mathcal{B}(F)$-free, then any blow-up of $H$ must be $\mathcal{B}(F)$-free. By Corollary~\ref{blowup containment corollary}, in order to prove that the limit
$$\gamma^+(F) = \lim_{n \rightarrow \infty} \frac{\mathrm{co^+ex}(n,F)}{n}$$
exists, it suffices to show the existence of the limit
$$\gamma^+(\mathcal{B}(F)) = \lim_{n \rightarrow \infty} \frac{\mathrm{co^+ex}(n,\mathcal{B}(F))}{n}.$$
We do so now.

 \begin{thm}
For any $r$-graph $F$, the limit 
$$\gamma^+(F) = \lim_{n \rightarrow \infty} \frac{\mathrm{co^+ex}(n,F)}{n}$$
exists. Moreover, for any $\varepsilon > 0$, there exists an $F$-free $r$-graph $H = H(F,\varepsilon)$ such that every blow-up of $H$ is $F$-free, and for all $n$ sufficiently large,
$$\frac{\delta_{r-1}^+(H[n/v(H)])}{n} > \gamma^+(F) - \varepsilon.$$ 

\end{thm}

\begin{proof} 
As described above, we instead will show the existence of 
$$\underset{n \rightarrow \infty}{\lim} \frac{\mathrm{co^+ex}(n,\mathcal{B}(F))}{n}.$$ 
Let $c_n := \frac{\mathrm{co^+ex}(n,\mathcal{B}(F))}{n}$. Note that adding an isolated vertex to an $F$-free $r$-graph does not change the minimum positive co-degree, so $\mathrm{co^+ex}(n,\mathcal{B}(F)) \leq \mathrm{co^+ex}(n+1,\mathcal{B}(F))$. On the other hand, removing any vertex from a $\mathcal{B}(F)$-free $r$-graph drops the minimum positive co-degree by at most $1$, so $\mathrm{co^+ex}(n+1,\mathcal{B}(F))  \leq \mathrm{co^+ex}(n,\mathcal{B}(F)) + 1$.

From here we get 
$$\frac{\mathrm{co^+ex}(n,\mathcal{B}(F)) }{n+1} \leq c_{n+1} \leq \frac{\mathrm{co^+ex}(n,\mathcal{B}(F)) + 1}{n+1}.$$

Observe that
$$
\frac{\mathrm{co^+ex}(n,\mathcal{B}(F))}{n+1}  = \frac{\mathrm{co^+ex}(n,\mathcal{B}(F))}{n+1} \frac{n}{n} = c_n \frac{n}{n+1} = c_n \left(1-\frac{1}{n+1} \right).
$$
Similarly,
\begin{align*}
\frac{\mathrm{co^+ex}(n,\mathcal{B}(F))+1}{n+1}  = \frac{\mathrm{co^+ex}(n,\mathcal{B}(F))+1}{n+1} \frac{n}{n} 
= \left(c_n + \frac{1}{n}\right) \frac{n}{n+1} = \left(c_n + \frac{1}{n}\right) \left(1-\frac{1}{n+1} \right).     
\end{align*}

Therefore,
$$ c_n \left(1-\frac{1}{n+1} \right) \leq c_{n+1} \leq \left(c_n + \frac{1}{n}\right) \left(1-\frac{1}{n+1} \right).$$
Since $c_n \leq 1$ for all $n$, observe that 
$$ c_n - \frac{1}{n} \leq c_n \left(1-\frac{1}{n+1} \right)$$
and clearly
$$\left(c_n + \frac{1}{n}\right) \left(1-\frac{1}{n+1} \right) \leq c_n + \frac{1}{n}.$$
So we conclude that 
$$c_n - \frac{1}{n} \leq c_{n+1} \leq c_n + \frac{1}{n}.$$
Thus,
$$|c_{n+1} - c_n | \leq \frac{1}{n},$$
and it follows that for $k \geq 0$, 
$$|c_{n+k} - c_n| \leq \sum_{i = 0}^{k-1} \frac{1}{n + i}.$$

As $0 \leq c_n \leq 1$ for all $n$, there is a least upper bound $\ell$ on the sequence $\{c_n\}$.  We shall prove that $\underset{n \rightarrow \infty}{\lim} c_n = \ell$. Fix $\varepsilon > 0$. Since $\ell$ is the least upper bound for $\{c_n\}$, there exists some $k$ such that $c_k > \ell - \frac{\varepsilon}{2}$. Let $ {H}_k$ be a $k$-vertex extremal $r$-graph for $\mathcal{B}(F)$.

Now, observe that if $ {H}$ is a $\mathcal{B}(F)$-free $r$-graph, then any blow-up of ${H}$ is also $\mathcal{B}(F)$-free. Indeed, suppose that some blow-up $H[t]$ of $H$ contains a copy of some $F_1 \in \mathcal{B}(F)$, and let $V_1, \dots, V_k$ be the classes of $H[t]$ which intersect this copy. These classes $V_1, \dots, V_k$ correspond to vertices $v_1, \dots v_k \in V(H)$. Let $F_2 = H[v_1, \dots, v_k]$, the subhypergraph of $H$ induced on $v_1, \dots, v_k$. We claim that $F_2 \in \mathcal{B}(F)$, a contradiction to the assumption that $H$ is $\mathcal{B}(F)$-free. Indeed, we have $v(F_2) \leq v(F_1) \leq v(F)$, so we will have $F_2 \in \mathcal{B}(F)$ as long as $F$ is contained in some blow-up of $F_2$. We know that $F_1 \subseteq F_2[t]$, and since $F_1 \in \mathcal{B}(F)$, there exists $t'$ such that $F \subseteq F_1[t']$. Thus, $F \subseteq F_1[t'] \subseteq F_2[tt']$, so indeed, $F_2 \in \mathcal{B}(F)$.

Now in particular, for any integer $m \geq 1$, the blow-up ${H}_{k}[m]$ is $F$-free. Observe that 
$$\delta_{r-1}^+( {H}_{k}[m]) = m \cdot \delta_{r-1}^+( {H}_k),$$
which shows that $c_{mk} \geq c_{k} > \ell - \frac{\varepsilon}{2}$ for all integers $m \geq 1$. 

Now, choose $N$ such that 
$k \leq N$ and $\sum_{i = 0}^{k- 2} \frac{1}{N+ i} < \frac{\varepsilon}{2}$.
Let $n \geq N$, and choose an integer $m$ so that $mk$ is the smallest multiple of $k$ with $n \leq mk$. Since $|mk - n| \leq k-1$, we know that 
$$|c_{mk} - c_{n}| \leq \sum_{i=0}^{k - 2} \frac{1}{n + i} \leq \sum_{i=0}^{k - 2} \frac{1}{N + i} < \frac{\varepsilon}{2}.$$
Since $\ell - \frac{\varepsilon}{2}< c_{mk}$, we must have $\ell - \varepsilon < c_n$. Thus, $\{c_n\}$ converges to $\ell$.

For the second part of the theorem statement, we must show that for any $\varepsilon > 0$, there exists some $H$ such that all sufficiently large balanced blow-ups $H[n/v(H)]$ are $F$-free and have $\frac{\delta_{r-1}^+(H[n/v(H)])}{n} > \gamma^+(F) - \varepsilon$. As above, we choose $k$  such that $c_k > \gamma^+(F) - \frac{\varepsilon}{2}$ and let $ {H}_k$ be a $k$-vertex extremal $r$-graph for $\mathcal{B}(F)$. We know that $\frac{\delta_{r-1}^+(H_k[q])}{qk} = \frac{\delta_{r-1}^+(H_k)}{k} > \gamma^+(F) - \varepsilon$ for all positive integers $q$, so we simply must address blow-ups $H_k[n/k]$ where $k$ does not divide $n$.  Let $n = qk + r$ for integers $q,r$ with $0 \leq r < k$. We have 
\begin{align*}
\frac{\delta_{r-1}^+(H_{k}[n/k])}{qk+r}& \geq \frac{\delta_{r-1}^+(H_{k}[q])}{qk+r} = \frac{\delta_{r-1}^+(H_{k}[q])}{qk} - \frac{r}{qk + r} \frac{\delta_{r-1}^+(H_k[q])}{qk} \\
& > \left( 1 - \frac{1}{q}\right) \frac{\delta_{r-1}^+(H_k[q])}{qk} = \left( 1 - \frac{1}{q}\right) \frac{\delta_{r-1}^+(H_k)}{k} > \left( 1 - \frac{1}{q}\right)\left( \gamma^+(F) - \frac{\varepsilon}{2}\right).  
\end{align*}

If $n$ is large enough that $q \geq \frac{2}{\varepsilon}$, we have $\left( 1 - \frac{1}{q}\right)\left( \gamma^+(F) - \frac{\varepsilon}{2}\right) > \gamma^+(F) - \varepsilon$.
Thus, for $n$ sufficiently large, the balanced blow-up $H_k[n/k]$ has $\delta_{r-1}^+(H_k[n/k])$ above the desired threshold.
\end{proof}

We now turn our attention to $3$-partite $3$-graphs. We start by considering $K_{2,2,2}$.

\begin{thm}\label{thm:K222}  $\mathrm{co^+ex}(n,K_{2,2,2}) = O\left(n^{5/6}\right)$.
\end{thm}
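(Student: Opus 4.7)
The plan is to reduce finding a $K_{2,2,2}$ in $H$ to finding a $K_{2,2}$ in a suitable $2$-graph, and then apply the K\H{o}v\'ari--S\'os--Tur\'an theorem. For each pair $\{a_1, a_2\} \subseteq V(H)$, we would define the \emph{common link} $L_{a_1, a_2}$: the $2$-graph on $V(H) \setminus \{a_1, a_2\}$ whose edges are those pairs $\{b, c\}$ such that both $a_1bc, a_2bc \in E(H)$. A $K_{2,2}$ in $L_{a_1, a_2}$ with sides $\{b_1, b_2\}$ and $\{c_1, c_2\}$ immediately yields a $K_{2,2,2}$ in $H$ on the three disjoint pairs $\{a_1, a_2\}, \{b_1, b_2\}, \{c_1, c_2\}$. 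Since $\ex(n-2, K_{2,2}) = O(n^{3/2})$, it suffices to exhibit a pair $\{a_1, a_2\}$ for which $|E(L_{a_1, a_2})|$ exceeds this K\H{o}v\'ari--S\'os--Tur\'an threshold.

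Setting $\Delta := \delta_2^+(H)$, we would next double count triples $(\{a_1, a_2\}, \{b, c\})$ with $\{b, c\} \in E(L_{a_1, a_2})$:
$$\sum_{\{a_1, a_2\}} |E(L_{a_1, a_2})| \;=\; \sum_{\{b, c\}} \binom{|N(b, c)|}{2} \;\geq\; P \binom{\Delta}{2},$$
where $P$ denotes the number of positive co-degree pairs and we use that every such pair satisfies $|N(b, c)| \geq \Delta$. To bound $P$ from below, we may discard all isolated vertices of $H$ (this changes neither $\delta_2^+$ nor the absence of $K_{2,2,2}$). Every remaining vertex $v$ lies in some $3$-edge $uvw$, and each element of $N(u, v)$ then forms a positive co-degree pair with $v$; so $v$ is in at least $\Delta$ positive co-degree pairs, and summing over vertices yields $P \geq n\Delta/2$.

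Combining the two bounds gives $\sum_{\{a_1, a_2\}} |E(L_{a_1, a_2})| = \Omega(n\Delta^3)$, and averaging over the $\binom{n}{2}$ pairs produces some $\{a_1, a_2\}$ with $|E(L_{a_1, a_2})| = \Omega(\Delta^3/n)$. For this quantity to exceed the K\H{o}v\'ari--S\'os--Tur\'an bound of order $n^{3/2}$, we need $\Delta = \omega(n^{5/6})$; hence if $H$ is $K_{2,2,2}$-free then $\Delta = O(n^{5/6})$, as claimed. The main obstacle is securing the lower bound $P = \Omega(n\Delta)$: the positive co-degree hypothesis forces every non-isolated vertex to have many ``link'' neighbors in the $2$-graph of positive co-degree pairs, but care is needed to handle isolated vertices and to set up the count so that Lemma~\ref{edge approx}-style ideas give the right factor of $n$. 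Once that piece is in place, the rest is a routine double count combined with K\H{o}v\'ari--S\'os--Tur\'an.
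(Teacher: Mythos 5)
Your proposal is correct and takes essentially the same route as the paper: both arguments estimate the same double count $T=\sum_{\{z_1,z_2\}}|E(L_{z_1,z_2})|=\sum_{\{x,y\}}\binom{|N(x,y)|}{2}$, bounding it below by $\Omega\bigl(n(\delta_2^+)^3\bigr)$ via the positive co-degree hypothesis and a bound on the number of isolated vertices, and above via the observation that $K_{2,2,2}$-freeness makes each common link $C_4$-free, hence of size $O(n^{3/2})$ by K\H{o}v\'ari--S\'os--Tur\'an. The only cosmetic differences are that you reach the conclusion through an averaging step and discard isolated vertices outright, whereas the paper bounds $T$ directly and uses a component-duplication trick to assume fewer than $n/2$ isolated vertices.
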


\begin{proof}
Let $H$ be an $n$-vertex 3-graph with maximal minimum positive co-degree such that $H$ contains no $K_{2,2,2}$. Note that we may assume that fewer than ${n}/{2}$ vertices of $H$ are isolated (if not, we can replace a subset of the isolated vertices of $H$ with a copy of the non-trivial components of $H$, thereby reducing the number of isolated vertices without changing the positive co-degree). Let $\delta_2^+ = \delta_2^+(H)$ be the minimum positive co-degree of $H$. 

Let $T$ denote the number of ordered pairs  $(\{x,y\},\{z_1,z_2\})$ such that $x,y,z_i$ form a $3$-edge of $H$ for $i=1,2$.  On the one hand if $z_1$ and $z_2$ are fixed, then the admissible pairs $\{x,y\}$ form a $C_4$-free 2-graph.  This gives the upper bound
\begin{equation*}
T \leq \binom{n}{2}cn^{3/2}.
\end{equation*}
On the other hand, let $E^+$ be the set of (unordered) pairs $\{x,y\}$ with positive co-degree. Then 
\begin{equation*}\label{eq:lowerbound}
|E^+| \binom{\delta_2^+}{2}\leq \sum_{\{x,y\} \in E^+}\binom{d(x,y)}{2} = T.
\end{equation*}

Since at most $n/2$ vertices of $H$ are isolated, we have that $| {E}^+| \geq \frac{n}{4}\delta^+_2$.  Combining these estimates for $T$ gives
\begin{equation*}
\left({\delta^+_2}\right)^3\leq O\left(n^{5/2}\right).
\end{equation*} 
\end{proof}

Thus, unlike previous examples, $\mathrm{co^+ex}(n,K_{2,2,2})$ is sub-linear. The next construction shows that, at least, $\mathrm{co^+ex}(n,K_{2,2,2})$ is not constant.

\begin{thm}\label{K222 lower bound}

$\mathrm{co^+ex}(n,K_{2,2,2}) = \Omega(n^{1/2})$.

\end{thm}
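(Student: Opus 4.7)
The plan is to exhibit, for every sufficiently large $n$, a $K_{2,2,2}$-free $n$-vertex $3$-graph with minimum positive co-degree $\Omega(n^{1/2})$, via an ``expansion'' of a $C_4$-free graph. By Bertrand's postulate, pick a prime power $q$ with $q = \Theta(n^{1/2})$ chosen so that the construction below fits in $n$ vertices. Let $G = (W, F)$ be the bipartite incidence graph of the projective plane $PG(2,q)$: $W$ consists of the $q^2+q+1$ points and $q^2+q+1$ lines, with an edge between a point and a line exactly when the point is incident to the line. Then $G$ is $(q+1)$-regular on $2(q^2+q+1)$ vertices and is $C_4$-free, because two points lie on a unique line. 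Let $A$ be a set of $q+1$ new ``apex'' vertices disjoint from $W$, and define the $3$-graph $H$ on $V(H) = W \cup A$ with edge set
\[
E(H) = \{\{w_1, w_2, a\} : w_1w_2 \in F,\ a \in A\}.
\]
Pad $H$ with isolated vertices so that $|V(H)| = n$; this does not affect the minimum positive co-degree.

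The positive co-degrees are easy to compute: a pair $\{w_1, w_2\} \subseteq W$ with $w_1w_2 \in F$ has co-degree $|A| = q+1$; a pair $\{w, a\}$ with $w \in W$ and $a \in A$ has co-degree $\deg_G(w) = q+1$ since $G$ is regular; and pairs inside $A$, pairs in $W$ that are not edges of $G$, and pairs involving the padded isolated vertices all have co-degree $0$. Thus $\delta_2^+(H) = q+1 = \Omega(n^{1/2})$.

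To verify that $H$ is $K_{2,2,2}$-free, suppose for contradiction that $H$ contains a copy on parts $\{x_1, x_2\}, \{y_1, y_2\}, \{z_1, z_2\}$. Every edge of $H$ contains exactly one vertex of $A$, so each of the eight triples $\{x_i, y_j, z_k\}$ contains exactly one vertex of $A$. Each of the six vertices of this $K_{2,2,2}$ lies in exactly four of these triples, so summing yields $|V(K_{2,2,2}) \cap A| = 8/4 = 2$. If the two $A$-vertices lie in different parts, some triple contains both, contradicting the one-$A$-vertex-per-triple count; hence both $A$-vertices lie in a common part, say $\{x_1, x_2\} \subseteq A$ and $y_1, y_2, z_1, z_2 \in W$. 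But then for each $j, k \in \{1,2\}$, the triple $\{x_1, y_j, z_k\}$ being an edge forces $y_jz_k \in F$, producing a $K_{2,2}$ in $G$ and contradicting the $C_4$-freeness of $G$.

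The main conceptual obstacle is recognizing the right construction. Taking all triples contained in a line of $PG(2,q)$ gives positive co-degree $\Theta(\sqrt n)$ but fails $K_{2,2,2}$-freeness, since six collinear points already span a $K_{2,2,2}$. The shift from ``triples inside a linear space'' to an expansion of a $C_4$-free graph reduces the $K_{2,2,2}$-avoidance problem to the well-studied problem of constructing dense $C_4$-free graphs, for which near-optimal constructions (projective-plane incidence graphs) immediately yield the bound.
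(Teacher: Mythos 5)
Your proof is correct and is essentially the paper's construction: the paper also expands a $C_4$-free bipartite graph $G$ on classes $Y,Z$ (citing Erd\H{o}s--R\'enyi--S\'os for the existence of such a graph with minimum degree $\Omega(\sqrt n)$) over an apex set $X$ of size $\Theta(\sqrt n)$, taking $3$-edges $xyz$ with $yz\in E(G)$, and your $A, W$ play exactly the roles of $X, Y\cup Z$. The only cosmetic differences are that you fix the projective-plane incidence graph explicitly and establish $K_{2,2,2}$-freeness by counting $A$-vertices over the eight triples rather than by observing that $H$ is $3$-partite and any $K_{2,2,2}$ must respect that tripartition; both routes are fine.
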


\begin{proof}

Partition $n$ vertices into three classes, $X,Y,Z$, so that $|Y| = |Z| \geq cn$ for some $c > 0$ and $|X| = \Omega(n^{1/2})$. Let $G$ be a $C_4$-free bipartite graph with classes $Y,Z$, with minimum degree as large as possible. Now, let $ {H}$ be the $3$-graph with $3$-edges $xyz$ such that $x \in X, y \in Y, z \in Z$, and $yz$ is an edge in the graph $G$. 

The fact that ${H}$ is $K_{2,2,2}$-free follows directly from the condition that $G$ is $C_4$-free. Indeed, since $ {H}$ is $3$-partite, if it contains a $K_{2,2,2}$, then this subgraph has two vertices in each of $X,Y,Z$. Let $x_1 ,x_2 \in X$,  $y_1 ,y_2 \in X$ and $z_1,z_2 \in Z$. If these six vertices induce a $K_{2,2,2}$, then $x_iy_jz_k$ is a $3$-edge for all choices of $(i,j,k)$. But this means that $y_jz_k$ is an edge in $G$ for all choices of $(j,k)$, i.e., a $G$ contains a $C_4$, a contradiction.

Now, we consider co-degrees in ${H}$. Let $x \in X$, $y \in Y$, and $z \in Z$. Observe that $y,z$ have co-degree either 0 (if $yz$ is not an edge in $G$) or $|X|$, which we assume to be $\Omega(n^{1/2})$. Next, $x,y$ have co-degree equal to the degree of $y$ in $G$. We chose $G$ so that $\delta(G)$ is maximized, so we know that $\delta(G) = \Omega(n^{1/2})$ by a construction of Erd\H os, R\'enyi and S\'os~\cite{ERS}. So the co-degree of $x,y$ (and, analogously, $x,z$) is also $\Omega(n^{1/2})$. 
\end{proof}

We do not make a further effort to determine the order of $\mathrm{co^+ex}(n,K_{2,2,2})$, but a resolution of this question remains interesting.

We next consider 3-partite 3-graphs more broadly. Unlike hypergraphs considered in the previous section, we have seen that $\mathrm{co^+ex}(n,K_{2,2,2})=o(n)$, i.e., $\gamma^+(K_{2,2,2}) = 0$. A natural goal is to characterize those $3$-graphs ${F}$ for which $\gamma^+(F) = 0$.

\begin{thm}\label{thm:degeneracy}
Let $F$ be a 3-graph. If $F$ is 3-partite, then $\mathrm{co^+ex}(n,F)\leq  O(n^{1-\varepsilon})$ for $\varepsilon = \varepsilon(F)$. Otherwise, $\mathrm{co^+ex}(n,F) =  \Theta(n)$.
\end{thm}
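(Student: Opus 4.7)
The argument naturally splits along the dichotomy. For the non-3-partite case, the lower bound $\mathrm{co^+ex}(n,F) \geq \lfloor n/3 \rfloor$ is witnessed by the balanced complete 3-partite 3-graph $K_{\lfloor n/3\rfloor, \lfloor n/3\rfloor, \lceil n/3\rceil}$, which is itself 3-partite and therefore contains no non-3-partite $F$; the matching upper bound $\mathrm{co^+ex}(n,F) \leq n-2$ is trivial, so $\mathrm{co^+ex}(n,F) = \Theta(n)$ in this case.

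For the 3-partite direction, let $t$ be a common upper bound on the part sizes of $F$, so that $F \subseteq K_{t,t,t}$. It therefore suffices to show $\mathrm{co^+ex}(n, K_{t,t,t}) = O(n^{1-\epsilon})$ for some $\epsilon = \epsilon(t) > 0$. The plan is to combine two ingredients: (i) the classical Erd\H{o}s theorem that $\ex_3(n, K_{t,t,t}) = O(n^{3-1/t^2})$; and (ii) a sub-linear version of Lemma~\ref{edge approx}, namely that any non-empty 3-graph $H$ with $\delta_2^+(H) \geq d$ satisfies $|E(H)| \geq d^3/6$.

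For (ii) I would simply re-run the proof of Lemma~\ref{edge approx}: the double-counting argument never uses that $d$ is linear in $n$, only that one has a positive co-degree pair $x,y$ with $|N(x,y)| \geq d$ and that every positive co-degree pair has common neighborhood of size at least $d$. Taking these as inputs, the same chain of inequalities yields at least $d^2/2$ positive co-degree pairs and thus $|E(H)| \geq d^3/6$. Given (i) and (ii), fix any $\epsilon$ with $0 < \epsilon < 1/(3t^2)$ and suppose $\delta_2^+(H) \geq C n^{1-\epsilon}$ with $C = C(t)$ chosen sufficiently large. Then
\[
|E(H)| \;\geq\; \frac{C^3}{6}\, n^{3-3\epsilon} \;>\; c_t\, n^{3-1/t^2}
\]
for all sufficiently large $n$, where $c_t$ is the constant from the Erd\H{o}s bound. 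Hence $H$ contains $K_{t,t,t}$, and therefore a copy of $F$, which gives $\mathrm{co^+ex}(n, F) = O(n^{1-\epsilon})$.

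The only delicate step is (ii), namely confirming that Lemma~\ref{edge approx} carries over to the sub-linear regime. A quick pass through its proof shows every estimate goes through with $d$ substituted for $cn$, so this should be routine. Otherwise the argument is a straightforward application of Erd\H{o}s's hypergraph Tur\'an bound, and no significant obstacle remains.
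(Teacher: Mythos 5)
Your proof is correct, and it is essentially the second of the two routes the paper offers for this theorem. The paper's primary argument generalizes the direct double-counting used for $K_{2,2,2}$: one counts pairs $(\{x,y\},\{z_1,\dots,z_l\})$ with $xyz_i \in E(H)$ for all $i$, bounds this count from above by noting that for fixed $\{z_1,\dots,z_l\}$ the admissible pairs $\{x,y\}$ form a $K_{j,k}$-free bipartite graph (giving a K\H{o}v\'ari--S\'os--Tur\'an-type bound), and from below by $|E^+|\binom{\delta_2^+}{l}$. Your version instead quotes Erd\H{o}s's hypergraph Tur\'an theorem as a black box and plugs it into the edge-density lower bound of Lemma~\ref{edge approx}; the paper sketches exactly this alternative and attributes the suggestion to Sam Spiro.

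One thing you did more carefully than the paper: the paper presents the Erd\H{o}s route as giving only ``a slightly weaker result,'' namely $\mathrm{co^+ex}(n,F) = o(n)$, because it invokes the ``contrapositive of Lemma~\ref{edge approx}'' as literally stated (which is phrased with the threshold $cn$). You correctly observe that the proof of Lemma~\ref{edge approx} in fact establishes $|E(H)| \geq \delta_2^+(H)^3/6$ with no linearity assumption, and hence $\delta_2^+(H) = O\bigl(\ex_3(n,K_{t,t,t})^{1/3}\bigr) = O(n^{1-1/(3t^2)})$, recovering the full polynomial saving. So the second route is not actually weaker. The only trivial slips are cosmetic: the balanced $3$-partite witness should be $K_{\lfloor n/3\rfloor,\lceil n/3\rceil,\lceil n/3\rceil}$ when $n\equiv 2\pmod 3$ so the parts sum to $n$, and the paper takes the minimal $K_{j,k,l}\supseteq F$ rather than padding all three parts to size $t$; neither affects the argument.
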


\begin{proof}
If $F$ is not 3-partite, then it is not contained in a complete balanced $3$-partite $n$-vertex 3-graph, so $\mathrm{co^+ex}(n,F) =  \Theta(n)$.

Now suppose that $F$ is 3-partite and
choose $j\leq k \leq l$ minimal such that $F \subseteq K_{j,k,l}$. We have $\mathrm{co^+ex}(n,F) \leq \mathrm{co^+ex}(n,K_{j,k,l})$. Let $H$ be an $n$-vertex $3$-graph with maximal minimum positive co-degree such that $H$ is $K_{j,k,l}$-free.

To bound $\mathrm{co^+ex}(n,K_{j,k,l})$, we count pairs $(\{x,y\},\{z_1,z_2,\ldots,z_l\})$ such that $x,y,z_i$ form a $3$-edge of $H$ for each $i\leq l$.  Denote the total number of such pairs by $T$. The proof follows by a similar counting argument used in the proof of Theorem~\ref{thm:K222}.
\end{proof} 

For a short proof of a slightly weaker result, it was shown in \cite{Er1} that for $3$-partite $F$, the Tur\'{a}n number $\ex_3(n, F)$ is $O(n^{3-c})$ for $c = c(F)$. Thus, by the contrapositive of Lemma~\ref{edge approx}, it is immediate that any $n$-vertex $F$-free $3$-graph has minimum positive co-degree $o(n)$. Another short proof of this slightly weaker statement follows from Corollary \ref{blowup containment corollary}. Note that if $F$ is $3$-partite, then $\mathcal{B}(F)$ contains the $3$-graph consisting of a single $3$-edge. Thus, $\mathrm{co^+ex}(n, \mathcal{B}(F)) = 0$, so $\mathrm{co^+ex}(n,F) \leq 0 + o(n) = o(n)$. In fact, these arguments generalize to all uniformities:

\begin{cor}\label{general degeneracy}
Let $F$ be an $r$-graph. If $F$ is $r$-partite, then $\gamma^+(F) = 0$. 
\end{cor}

In light of Corollary \ref{general degeneracy}, we can now quickly show that the positive co-degree density of $r$-graphs jumps from $0$ to $1/r$.

\begin{cor}\label{codegree jump}
Let $F$ be an $r$-graph such that $\gamma^+(F)>0$. Then $\gamma^+(F) \geq 1/r$.
\end{cor}

\begin{proof}
If $F$ is such an $r$-graph, then
by Corollary \ref{general degeneracy}, $F$ cannot be $r$-partite.  Therefore for all $n$, $F$ is not a subgraph of the complete balanced $r$-partite $r$-graph on $n$ vertices. This implies that $\mathrm{co^+ex}(n,F) \geq \lfloor n/r \rfloor$ for all $n$, which completes the proof.
\end{proof}

Note that it also follows immediately that for any family $\mathcal{F}$ of forbidden $r$-graphs either has $\gamma^+(\mathcal{F}) = 0$ or $\gamma^+(\mathcal{F}) \geq 1/r$.  Thus, we say that positive co-degree density ``jumps" from 0 to $1/r$. In a forthcoming manuscript, we develop additional techniques which demonstrate at least one more jump in positive co-degree density for $3$-graphs; thus, the situation described in Corollary~\ref{codegree jump} is not an isolated phenomenon.

\section*{Acknowledgements}

The authors are grateful to J\'ozsef Balogh, Felix Christian Clemen and Bernard Lidick\'y for  permission to use their figures from~\cite{l2norm}.
We thank Sam Spiro for suggesting the first alternative proof of Theorem~\ref{thm:degeneracy} and Felix Christian Clemen for pointing out that the argument in \cite{Mubayicode} can be adapted to show that the limit defining $\gamma^+(\mathcal{F})$ exists. We also thank Balazs Patk\'os for many useful discussions.

\bibliographystyle{abbrvurl}

\begin{thebibliography}{10}






\bibitem{BaberTuran}
R.~Baber.
\newblock Tur\'an densities of hypercubes.
\newblock 2012.
\newblock \href {http://arxiv.org/abs/1201.3587} {\path{arXiv:1201.3587}}.




 \bibitem{l2norm} 
 J. Balogh, F.C. Clemen, B. Lidick\'y. 
 \newblock Hypergraph Tur\'an Problems in $\ell_2$-Norm.
 \newblock 2021.
 \newblock \href{http://arxiv.org/abs/2108.10406} {\path{arXiv:2108.10406}}. 



 
 \bibitem{BLP}
 J. Balogh, N. Lemons, C. Palmer.
\newblock Maximum size intersecting families of bounded minimum positive co-degree. 
\newblock SIAM Journal on Discrete Mathematics, 35(3):1525--1535, 2021.
\newblock \href{https://doi.org/10.1137/20M1336989}
{\path{doi.org/10.1137/20M1336989}}.
 
 
\bibitem{BL}
J. Balogh, B. Lidick\'y. 
\newblock (personal communication)
\newblock 2022.

\bibitem{Bollobascancellative}
B.~Bollob\'{a}s.
\newblock Three-graphs without two triples whose symmetric difference is
  contained in a third.
\newblock {\em Discrete Math.}, 8:21--24, 1974.
\newblock \href {https://doi.org/10.1016/0012-365X(74)90105-8}
  {\path{doi:10.1016/0012-365X(74)90105-8}}.

\bibitem{DaisyBollobas}
B.~Bollob\'{a}s, I.~Leader, and C.~Malvenuto.
\newblock Daisies and other {T}ur\'{a}n problems.
\newblock {\em Combin. Probab. Comput.}, 20(5):743--747, 2011.
\newblock \href {https://doi.org/10.1017/S0963548311000319}
  {\path{doi:10.1017/S0963548311000319}}.





\bibitem{MR1829685}
A.~Czygrinow and B.~Nagle.
\newblock A note on codegree problems for hypergraphs.
\newblock {\em Bull. Inst. Combin. Appl.} 32:63--69, 2001.



\bibitem{FanoFuredi}
D.~de~Caen and Z.~F\"{u}redi.
\newblock The maximum size of 3-uniform hypergraphs not containing a {F}ano
  plane.
\newblock {\em J. Combin. Theory Ser. B}, 78(2):274--276, 2000.
\newblock \href {https://doi.org/10.1006/jctb.1999.1938}
  {\path{doi:10.1006/jctb.1999.1938}}.

\bibitem{ConlonFox}
D. Conlon and J. Fox.
\newblock Graph removal lemmas.
\newblock In {\em Surveys in Combinatorics, 2013}, London Mathematical Society Lecture Note Series, volume 409, pages 1--50, 2013.

\bibitem{Er1}
P. Erd\H os.
\newblock On extremal problems of graphs and generalized graphs.
\newblock {\it Israel J. Math.} 2 (1964) 183--190.



\bibitem{ERS}
P. Erd\H os, A. R\'enyi, and V.~T. S\'os.
\newblock On a problem of graph theory.
\newblock {\it Stud Sci. Math. Hung.} 1 (1966), 215--235.

\bibitem{ErdosSimonovits}
P.~Erd\H{o}s and M.~Simonovits.
\newblock A limit theorem in graph theory.
\newblock {\em Studia Sci. Math. Hungar.}, 1:51--57, 1966.

\bibitem{ErdosStone}
P.~Erd\H{o}s and A.~H. Stone.
\newblock On the structure of linear graphs.
\newblock {\em Bull. Amer. Math. Soc.}, 52:1087--1091, 1946.
\newblock \href {https://doi.org/10.1090/S0002-9904-1946-08715-7}
  {\path{doi:10.1090/S0002-9904-1946-08715-7}}.



\bibitem{codF32falgas}
V.~Falgas-Ravry, E.~Marchant, O.~Pikhurko, and E.~R. Vaughan.
\newblock The codegree threshold for 3-graphs with independent neighborhoods.
\newblock {\em SIAM J. Discrete Math.}, 29(3):1504--1539, 2015.
\newblock \href {https://doi.org/10.1137/130926997}
  {\path{doi:10.1137/130926997}}.

\bibitem{FalgasK4-}
V.~Falgas-Ravry, O.~Pikhurko, E.~Vaughan, and J.~Volec.
\newblock The codegree threshold of ${K}_4^-$.
\newblock {\em Electronic Notes in Discrete Mathematics}, 61:407--413, 2017.
\newblock \href {https://doi.org/10.1016/j.endm.2017.06.067}
  {\path{doi:10.1016/j.endm.2017.06.067}}.



\bibitem{F5Frankl}
P.~Frankl and Z.~F\"{u}redi.
\newblock A new generalization of the {E}rd{\H o}s-{K}o-{R}ado theorem.
\newblock {\em Combinatorica}, 3(3--4):341--349, 1983.
\newblock \href {https://doi.org/10.1007/BF02579190}
  {\path{doi:10.1007/BF02579190}}.
  


\bibitem{K43-extremalfrankl}
P.~Frankl and Z.~F\"{u}redi.
\newblock An exact result for {$3$}-graphs.
\newblock {\em Discrete Math.}, 50(2-3):323--328, 1984.
\newblock \href {https://doi.org/10.1016/0012-365X(84)90058-X}
  {\path{doi:10.1016/0012-365X(84)90058-X}}.



\bibitem{F32furedi}
Z.~F\"{u}redi, O.~Pikhurko, and M.~Simonovits.
\newblock The {T}ur\'{a}n density of the hypergraph {$\{abc,ade,bde,cde\}$}.
\newblock {\em Electron. J. Combin.}, 10:Research Paper 18, 7, 2003.
\newblock URL:
  \url{http://www.combinatorics.org/Volume_10/Abstracts/v10i1r18.html}, \href
  {https://doi.org/10.37236/1711} {\path{doi:10.37236/1711}}.



\bibitem{KNS}  G. Katona, T. Nemetz and M. Simonovits. 
\newblock On a problem of Tur\'an in the theory of graphs. 
\newblock {\it Mat. Lapok} {\bf 15} (1964) 228--238.



\bibitem{Keevashsurvey}
P.~Keevash.
\newblock Hypergraph {T}ur\'{a}n problems.
\newblock In {\em Surveys in combinatorics 2011}, volume 392 of {\em London
  Math. Soc. Lecture Note Ser.}, pages 83--139. Cambridge Univ. Press,
  Cambridge, 2011.
\newblock \href {https://doi.org/10.1017/CBO9781139004114.004}
  {\path{doi:10.1017/CBO9781139004114.004}}.



\bibitem{LoMark}
A.~Lo and K.~Markstr\"{o}m.
\newblock {$\ell$}-degree {T}ur\'{a}n density.
\newblock {\em SIAM J. Discrete Math.}, 28(3):1214--1225, 2014.
\newblock \href {https://doi.org/10.1137/120895974}
  {\path{doi:10.1137/120895974}}.

\bibitem{AllanZhao}
A.~Lo and Y.~Zhao.
\newblock Codegree {T}ur\'{a}n density of complete {$r$}-uniform hypergraphs.
\newblock {\em SIAM J. Discrete Math.}, 32(2):1154--1158, 2018.
\newblock \href {https://doi.org/10.1137/18M1163956}
  {\path{doi:10.1137/18M1163956}}.



\bibitem{MubayiFano}
D.~Mubayi.
\newblock The co-degree density of the {F}ano plane.
\newblock {\em J. Combin. Theory Ser. B}, 95(2):333--337, 2005.
\newblock \href {https://doi.org/10.1016/j.jctb.2005.06.001}
  {\path{doi:10.1016/j.jctb.2005.06.001}}.

\bibitem{F32Mubayi}
D.~Mubayi and V.~R\"{o}dl.
\newblock On the {T}ur\'{a}n number of triple systems.
\newblock {\em J. Combin. Theory Ser. A}, 100(1):136--152, 2002.
\newblock \href {https://doi.org/10.1006/jcta.2002.3284}
  {\path{doi:10.1006/jcta.2002.3284}}.

\bibitem{Mubayicode}
D.~Mubayi and Y.~Zhao.
\newblock Co-degree density of hypergraphs.
\newblock {\em J. Combin. Theory Ser. A}, 114(6):1118--1132, 2007.
\newblock \href {https://doi.org/10.1016/j.jcta.2006.11.006}
  {\path{doi:10.1016/j.jcta.2006.11.006}}.

\bibitem{codegreeconj}
B.~Nagle.
\newblock Tur\'{a}n related problems for hypergraphs.
\newblock In {\em Proceedings of the {T}hirtieth {S}outheastern {I}nternational
  {C}onference on {C}ombinatorics, {G}raph {T}heory, and {C}omputing ({B}oca
  {R}aton, {FL}, 1999)}, volume 136, pages 119--127, 1999.

    \bibitem{P} 
 O. Pikhurko
 \newblock On the limit of the positive $\ell$-degree Tur\'an problem
 \newblock 2023.
 \newblock \href{http://arxiv.org/abs/2302.08123} {\path{arXiv:2302.08123}}. 

\bibitem{Sidorenkocode}
A.~Sidorenko.
\newblock Extremal problems on the hypercube and the codegree {T}ur\'{a}n
  density of complete {$r$}-graphs.
\newblock {\em SIAM J. Discrete Math.}, 32(4):2667--2674, 2018.
\newblock \href {https://doi.org/10.1137/17M1151171}
  {\path{doi:10.1137/17M1151171}}.

\bibitem{VeraSos}
V.~T. S\'{o}s.
\newblock Remarks on the connection of graph theory, finite geometry and block
  designs.
\newblock {\em Colloquio {I}nternazionale sulle {T}eorie {C}ombinatorie
  ({R}oma, 1973), {T}omo {II}}, pages 223--233. Atti dei Convegni Lincei, No.
  17, 1976.

\bibitem{Spiro}
S. Spiro.
\newblock On $t$-intersecting Hypergraphs with Minimum Positive Codegrees.
\newblock 2021.
\newblock  \newblock \href{http://arxiv.org/abs/2110.10317} {\path{arXiv:2110.10317}}.



\bibitem{MR177847}
P.~Tur\'{a}n.
\newblock Research problems.
\newblock {\em Magyar Tud. Akad. Mat. Kutat\'{o} Int. K\"{o}zl.}, 6:417--423,
  1961.
\newblock \href {https://doi.org/10.1007/BF02017934}
  {\path{doi:10.1007/BF02017934}}.

\bibitem{flagmatic}
E.~Vaughan.
\newblock Flagmatic software package.
\newblock {\tt \url{http://jakubsliacan.eu/flagmatic/}} or {\tt
  \url{http://lidicky.name/flagmatic/}}.

\bibitem{volec}
J.~Volec.
\newblock (personal communication)
\newblock 2023.

\bibitem{Wu}
Z.~Wu.
\newblock Positive co-degree Tur\'an number for $C_5$ and  $C_5^-$.
\newblock 2022.
\newblock  \newblock \href{https://arxiv.org/abs/2212.12815} {\path{arXiv:2212.12815}}.

\end{thebibliography}

\end{document}